\newcommand{\supp}{\text {\rm supp}}
\newcommand{\End}{{\rm End}}
\def\ds{\dot{s}}
\def\i{^{-1}}
\def\ge{\geqslant}
\def\le{\leqslant}
\def\<{\langle}
\def\>{\rangle}
\def\tPhi{\tilde{\Phi}}
\def\N{{\rm{N}}}
\def\a{\alpha}
\def\b{\beta}
\def\g{\gamma}
\def\D{\Delta}
\def\e{\epsilon}
\def\s{\sigma}
\def\k{\kappa}
\def\l{\lambda}
\def\Om{\Omega}
\def\ZZ{\mathbb Z}
\def\NN{\mathbb N}
\def\FF{\mathbb F}
\def\RR{\mathbb R}
\def\LL{\mathbb L}
\def\ca{\mathcal A}
\def\co{\mathcal O}
\def\cp{\mathcal P}
\def\cu{\mathcal U}
\def\tW{\tilde W}
\def\tw{\tilde w}
\theoremstyle{plain}
\newtheorem{thm}{Theorem}[section]
\newtheorem*{thm*}{Theorem}
\newtheorem{prop}[thm]{Proposition}
\newtheorem{lem}[thm]{Lemma}
\newtheorem{cor}[thm]{Corollary}
\theoremstyle{definition}
\theoremstyle{remark}
\newtheorem*{rmk}{Remark}
\newtheorem*{claim*}{Claim}
\begin{document}

\author{Sian Nie}
\thanks{{\it Address}: Institute of Mathematics, CAS, 100190, Beijing}
\thanks{{\it Current address}: Max-Planck Institute of Mathematics, 53111, Bonn}
\thanks{{\it E-mail}: niesian@amss.ac.cn}

\title[]{Fundamental elements of an affine Weyl group}

\begin{abstract} Fundamental elements are certain special elements of affine Weyl groups introduced by G\"{o}rtz, Haines, Kottwitz and Reuman. They play an important role in the study of affine Deligne-Lusztig varieties. In this paper, we obtain characterizations of the fundamental elements and their natural generalizations. We also derive an inverse to a version of ``Newton-Hodge decomposition" in affine flag varieties. As an application, we obtain a group-theoretic generalization of Oort's results on minimal $p$-divisible groups, and we show that, in certain good reduction reduction of PEL Shimura datum, each Newton stratum contains a minimal Ekedahl-Oort stratum. This generalizes a result of Viehmann and Wedhorn.
\end{abstract}
\maketitle

\section{Introduction}
Let $D$ be a $p$-divisible group over an algebraically closed field of positive characteristic. Following Oort, we say $D$ is {\it minimal} if for any $p$-divisible group $D'$ one has $$D[p] \cong D'[p] \Leftrightarrow D \cong D'.$$ Oort proved that in each isogeny class of $p$-divisible groups there exists a unique isomorphism class of minimal $p$-divisible groups \cite{O}. By taking its group-theoretic analogue, Viehmann and Wedhorn constructed {\it minimal Ekedahl-Oort strata} within a given {\it Newton stratum} for good reductions of PEL Shimura varieties, which led to a proof of the ``Manin's conjecture", i.e., the nonemptiness of all Newton strata for the PEL-case \cite{VW}. In their construction, the {\it fundamental ($P$-fundamental) alcove elements}, introduced by G\"{o}rtz, Haines, Kottwitz and Reuman \cite{GHKR}, played an important role in providing minimal representatives in each Newton stratum. The main purpose of this paper is to provide a detailed study of fundamental elements and their relations with minimal Ekedahl-Oort strata using group-theoretic methods.

\

Let $\FF_q$ be the finite field of $q$ elements and $\FF$ an algebraic closure of $\FF_q$. We denote by $\LL=\FF((\e))$ the field of formal power series over $\FF$. Let $G$ a (connected) reductive group over $\FF_q[[\e]]$.\footnote{We can also consider the reductive group defined over the ring of Witt vectors over $\FF$. Since we use group-theoretic methods, all the results of the paper remain true.} Then $G$ is quasi-split and splits over a finite unramified extension of $\FF_q[[\e]]$. Fix $T \subset B \subset G$, where $T$ (resp. $B$) is a maximal torus (resp. Borel subgroup) over $\FF_q[[\e]]$.  Let $I$ be the preimage of $B(\FF)$ under the natural projection $K=G(\FF[[\e]]) \to G(\FF)$, which is called an {\it Iwahori subgroup} of $G(\LL)$. By Bruhat decomposition, we have $\tW \cong I \backslash G(\LL) / I $. Here $\tW=\N_G(T)(\LL) / T(\FF[[\e]])$ denotes the {\it Iwahori-Weyl group} of $G$.

Let $\s$ be the Frobenius automorphism of $\LL$ over $\FF_q((\e))$. We also denote by $\s$ the induces automorphism on $G(\LL)$. Let $L \subset G$ is a Levi subgroup containing $T$. We say $y \in \tW$ is {\it $L$-permissible} if there exists a cocharacter $\l$ of $T$ satisfying $L=M_\l$ and $\bar y\s(\l)=\l$. Here $M_\l$ denotes the centralizer of $\l$ in $G$. When $G$ is split, $y$ is $L$-permissible means $y$ if and only if in the Iwahori-Weyl group of $L$.

Let $I_L=I \cap L(\LL)$, which is an Iwahori subgroup of $L(\LL)$. For $\tw \in \tW$ we denote by $I_L \tw \s(I_L) /_\s I_L$ (resp. $I \tw I /_\s I$) the set of $I_L$-$\s$-conjugacy classes (resp. $I$-$\s$-conjugacy classes) of $I_L \tw \s(I_L)$ (resp. $I \tw I$). We consider the following map $$\psi_{L, \tw}^G: I_L \tw \s(I_L) /_\s I_L \to I \tw I /_\s I$$ induced by the natural inclusion $I_L \tw \s(I_L) \hookrightarrow I \tw I$. The most interesting case is when $\tw$ is $L$-permissible. Then $\phi_{L,\tw}^G$ is an injection (see Lemma \ref{injection}) but in general not a surjection. However, for {\it $P$-alcove elements} of $\tW$ (see \S \ref{straight}), we have the following affirmative answer. Here $P \subset G$ denotes a {\it semistandard} parabolic subgroup, i.e., a parabolic subgroup containing $T$. We write $P=MN$ to mean that $M \supset T$ is the unique Levi group, and $N$ is the unipotent radical.

\begin{thm}[\cite{GHKR}, \cite{GHN}] \label{N-H}
Let $\tw \in \tW$ be a $P=MN$-alcove. Then

(a) $\psi_{M, \tw}^G$ is a bijection;

(b) If moreover $\tw \s(I_M) \tw\i=I_M$, then $I \tw I$ lies in a single $I$-$\s$-conjugacy class. In this case, we say $\tw$ is $P$-fundamental.
\end{thm}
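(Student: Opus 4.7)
The plan rests on the Iwahori decomposition $I = I_N \cdot I_M \cdot I_{\bar N}$ (with unique factorization), where $I_N = I \cap N(\LL)$, $I_{\bar N} = I \cap \bar N(\LL)$, and $\bar N$ is the unipotent radical of the parabolic opposite to $P$. The $P$-alcove hypothesis on $\tw$ should be packaged as contraction statements of the form $\tw\sigma(I_N)\tw\i \subseteq I_N$ together with a symmetric inclusion for $I_{\bar N}$ (with the roles of $\tw$ and $\sigma$ exchanged). These contractions are precisely what is needed to absorb the $I_N$- and $I_{\bar N}$-contributions of any element of $I\tw I$ via $\sigma$-conjugation.

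For part (a), I would prove surjectivity of $\psi^G_{M,\tw}$ by taking $x = n m \bar n \cdot \tw \cdot \sigma(n' m' \bar n') \in I\tw I$ and reducing in two stages. First, $\sigma$-conjugate by a suitable element of $I_N$ to eliminate the $N$-contribution on the left; this amounts to solving a fixed-point equation in $I_N$ of the shape $n_0 = (\text{prescribed}) \cdot \tw\sigma(n_0)\tw\i$, which converges by iteration thanks to the Moy--Prasad filtration and the strict contraction. A symmetric step with $I_{\bar N}$ removes the $\bar N$-contribution, leaving an element of $I_M\tw\sigma(I_M)$. For injectivity, if two elements of $I_M\tw\sigma(I_M)$ are $I$-$\sigma$-conjugate by $i = n m \bar n$, then uniqueness of the Iwahori factorizations on the two sides, combined with the same contractions, should force both $n$ and $\bar n$ to be trivial, so that $i \in I_M$.

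For part (b), the stronger hypothesis $\tw\sigma(I_M)\tw\i = I_M$ makes the twist $\tilde\sigma : I_M \to I_M$, $m \mapsto \tw\sigma(m)\tw\i$, an automorphism and gives $I_M\tw\sigma(I_M) = I_M\tw$. Combined with part (a), the claim reduces to surjectivity of the Lang-type map $I_M \to I_M$, $m \mapsto m \tilde\sigma(m)\i$. I would realize $I_M$ as an inverse limit of algebraic groups $I_M / I_M^{(k)}$ over $\bar\FF_q$ via the Moy--Prasad filtration, apply Lang--Steinberg at each finite level (using Lang on the reductive quotient of $M$ at the bottom step and on each unipotent layer above), and lift through the tower by pro-finite compactness.

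The main obstacle I expect is in part (a), in organizing the two reduction stages so that removing the $\bar N$-contribution does not reintroduce an $N$-contribution, and in keeping track of the correct factorization order ($I = I_N I_M I_{\bar N}$ versus $I = I_{\bar N} I_M I_N$) as one conjugates by $\tw\sigma$. Once this bookkeeping is settled, the contractions supplied by the $P$-alcove axiom together with the pro-$p$ completeness of $I_N$ and $I_{\bar N}$ should make both the fixed-point iteration and the injectivity argument routine.
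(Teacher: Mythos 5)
The paper does not prove Theorem \ref{N-H}; it quotes it from \cite{GHKR} (Theorem 1.1.5) and \cite{GHN} (Theorem 3.3.1), and your outline is indeed the strategy of the original proof: Iwahori factorization $I=I_NI_MI_{\bar N}$, the $P$-alcove condition packaged as ${}^{\tw\s}I_N\subseteq I_N$ and ${}^{(\tw\s)\i}I_{\bar N}\subseteq I_{\bar N}$, reduction of any element of $I\tw I$ into $I_M\tw\s(I_M)$, and a Lang--Steinberg argument on the pro-algebraic group $I_M$ for (b). However, two of your steps have genuine gaps. First, in the surjectivity part of (a): the $P$-alcove condition gives only the inclusions above, \emph{not} a strict contraction --- ${}^{\tw\s}$ may permute the affine root subgroups of $I_N$ without increasing depth --- so the fixed-point iteration for $n_0=(\text{prescribed})\cdot\tw\s(n_0)\tw\i$ need not converge $\e$-adically. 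Already $\tw=1$ is a $B$-alcove element (take $v$ strictly dominant and $\s$-invariant), and the assertion in that case is that every element of $I$ is $I$-$\s$-conjugate into $T(\FF[[\e]])$; the equation you would have to solve is the Lang equation $n_0\,\s(n_0)\i=n$ on $I_N$, which at depth zero is an Artin--Schreier equation $z-(\text{unit})\,z^q=c$ over $\FF$, solvable only because $\FF$ is algebraically closed and certainly not by an $\e$-adically convergent iteration. So the Lang--Steinberg input you reserve for part (b) is already indispensable in part (a): the correct argument proceeds level by level along the filtration of $I$, solving at each graded step a twisted Artin--Schreier (Lang) equation over $\FF$, with the $P$-alcove inclusions guaranteeing that the terms created by moving $\s(I_N)$ (resp.\ $\s(I_{\bar N})$) across $\tw$ stay in $I_N$ (resp.\ $I_{\bar N}$) at non-decreasing level, and completeness giving the limit.

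Second, your injectivity argument is aimed at the wrong statement: it is false that an $i\in I$ carrying one element of $I_M\tw\s(I_M)$ to another must lie in $I_M$. With $\tw=1$, $M=T$ and $x=x'=1$, any $i=u_\a(z)$ with $0\neq z\in\FF_q[[\e]]$ satisfies $i\i x\s(i)=x'$, so uniqueness of the Iwahori factorization of $i$ cannot force $n=\bar n=1$. What must be shown is a statement about classes: if $i\i x\s(i)=x'$ with $x,x'\in I_M\tw\s(I_M)$, then some $i_M\in I_M$ satisfies $i_M\i x\s(i_M)=x'$. This is exactly where the Iwahori decomposition ${}^{\tw\s}I\cap I=({}^{\tw\s}I_{N}\cap I_{N})({}^{\tw\s}I_{M}\cap I_{M})({}^{\tw\s}I_{\bar N}\cap I_{\bar N})$ (recalled in Lemma \ref{injection} of the paper, following \cite{GHKR}) enters, again combined with an approximation argument that modifies the conjugator rather than proving it trivial. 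Your part (b) is essentially correct (it is Proposition 6.3.1 of \cite{GHKR}): once (a) is available and ${}^{\tw\s}I_M=I_M$, surjectivity of the twisted Lang map on $I_M$ follows from Lang--Steinberg on connected finite-level quotients stable under $m\mapsto\tw\s(m)\tw\i$ and passage to the inverse limit.
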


The theorem was first established in \cite[Theorem 1.1.5]{GHKR} for split groups, where part (a) is referred to as a version of Hodge-Newton decomposition relating affine Deligne-Lusztig varieties associated to $\tw$ for $G$ and those for $L$. Its generalization for quasi-split groups was obtained in \cite[Theorem 3.3.1]{GHN}. In this paper, we adopt the (generalized) notion of $P$-alcove element introduced in loc.cit, but in a slight different form.

\

The first goal of this paper is to find an inverse of Theorem \ref{N-H}.
\begin{thm}\label{bij}
Assume $\tw$ is $L$-permissible. Then $\psi_{L, \tw}^G$ is a bijection if and only if $\tw$ is a $P$-alcove element for some semistandard parabolic subgroup $P=MN$ such that $M \subset L$.
\end{thm}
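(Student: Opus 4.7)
The plan is to handle the two implications separately. The backward direction $(\Leftarrow)$ follows quickly from Theorem \ref{N-H} via a composition argument, while the forward direction $(\Rightarrow)$ is the substantive content --- the promised inverse of Theorem \ref{N-H}.

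For $(\Leftarrow)$, suppose $\tw$ is a $P=MN$-alcove element with $M \subset L$. Since $M \subset L$, the intersection $P \cap L = M(N \cap L)$ is a semistandard parabolic of $L$ with Levi $M$, and the defining inequalities of a $P$-alcove element (see \S\ref{straight}) restrict directly to the statement that $\tw$ is a $(P \cap L)$-alcove element in the Iwahori-Weyl group of $L$. Applying Theorem \ref{N-H}(a) to $(G,P)$ gives that $\psi_{M,\tw}^G$ is a bijection, and applying it to $(L, P \cap L)$ gives that $\psi_{M,\tw}^L$ is a bijection. The inclusions $I_M \tw \s(I_M) \subset I_L \tw \s(I_L) \subset I \tw I$ induce on $\s$-conjugacy quotients a factorization $\psi_{M,\tw}^G = \psi_{L,\tw}^G \circ \psi_{M,\tw}^L$, and bijectivity of $\psi_{L,\tw}^G$ follows at once.

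For $(\Rightarrow)$, assume $\tw$ is $L$-permissible and $\psi_{L,\tw}^G$ is a bijection; the task is to exhibit a semistandard $P=MN$ with $M \subset L$ making $\tw$ a $P$-alcove. I would construct $M$ and $P$ canonically from the Newton vector $\nu$ of $\tw$: set $M$ to be the centralizer of $\nu$ in $G$, which lies inside $L$ because $L$-permissibility forces $\nu$ into the closed chamber attached to $L$, and set $P=MN$ to be the unique semistandard parabolic whose unipotent radical $N$ pairs non-negatively with $\nu$. The inclusion $\tw \in \tW_M$ --- the combinatorial half of the $P$-alcove condition --- then holds by the standard relation between an element of $\tW$ and its Newton vector.

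The remaining and principal task is the geometric half of the $P$-alcove condition, namely a positivity property of $\tw$ against the affine roots whose root subgroup lies in $N$. I would argue by contrapositive: if positivity failed at some affine root $\a$, a carefully chosen element $u \in U_\a$ would, under $\s$-conjugation of $\tw$, yield a representative in $I\tw I$ whose $I$-$\s$-conjugacy class fails to meet $I_L \tw \s(I_L)$, contradicting surjectivity of $\psi_{L,\tw}^G$. The main obstacle lies precisely in making this rigorous: one needs a length-reduction or straight-conjugate analysis, presumably in the spirit of \S\ref{straight}, to ensure that such a $u$ genuinely witnesses a new $\s$-conjugacy class rather than being absorbed into one already represented in $I_L \tw \s(I_L)$; injectivity of $\psi_{L,\tw}^G$ enters symmetrically to rule out redundant representatives that might overcount the target.
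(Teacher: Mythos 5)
Your backward direction is essentially correct, but it takes an unnecessary and partly unjustified detour: you invoke Theorem \ref{N-H}(a) \emph{for the group $L$} applied to $\tw$, which requires knowing both that the alcove inequalities restrict from $G$ to $L$ and that the theorem applies to an element that normalizes but need not lie in $\tW_L$; neither is argued. The paper avoids this entirely: surjectivity of $\psi_{M,\tw}^G$ (Theorem \ref{N-H}(a)) factors through $\psi_{L,\tw}^G$ because $I_M\tw\s(I_M)\subset I_L\tw\s(I_L)$, so $\psi_{L,\tw}^G$ is surjective, and injectivity is exactly Lemma \ref{injection} applied with the cocharacter $\l$ furnished by $L$-permissibility.

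The forward direction, however, has a genuine gap on two counts. First, your canonical choice $M=M_{\nu_{\tw}}$ (centralizer of the Newton vector) need not satisfy $M\subset L$: $L$-permissibility only says $\bar\tw\s(\l)=\l$ for some $\l$ with $M_\l=L$, and it does not place $\nu_{\tw}$ in the chamber attached to $L$. For instance, any $\tw$ with central Newton point (e.g.\ a torsion element) and $L\subsetneq G$ gives $M_{\nu_{\tw}}=G\not\subset L$, while the theorem's conclusion is still true with a smaller $M$. The paper instead chooses $v$ to be a generic point, close to $\nu_{\tw}$, of the plane spanned by $\l$ and $\nu_{\tw}$, so that $M_v=M_{\nu_{\tw}}\cap M_\l\subset L$, and -- crucially -- it makes this choice only after replacing $\tw$ (via Theorem \ref{min}, with Lemmas \ref{f}(a) and \ref{f'}(a) transporting both the alcove property and the surjectivity of $\psi$ along $\approx_\s$) by a minimal-length element whose fixed space $V_{\tw}$ has a regular point $e$ in $\bar\D$; that regular point is what makes the positivity $\tw\D\ge_\a\D$ provable. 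Second, the ``geometric half'' that you acknowledge as the main obstacle is precisely where all the work lies, and your contrapositive sketch does not obviously close: the failure of positivity for your particular $P$ contradicts nothing, since the statement only asserts existence of \emph{some} $P=MN$ with $M\subset L$. The paper's actual mechanism is an induction on $\ell(\tw)$: if the length can be decreased by some $s\in S^a$, then Lemma \ref{decrease} shows that surjectivity of $\psi_{L,\tw}^G$ forces $\bar s(\l)=\l$ (this is the rigorous form of your ``unipotent witness''), Lemma \ref{f'}(b) passes surjectivity down to $\tw\s(s)$ and $s\tw\s(s)$, and the cited results \cite[Lemma 4.4.2, 4.4.4]{GHN} reassemble a $P_v$-alcove statement for $\tw$ from those for the shorter elements. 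None of this machinery appears in your outline, so as written the forward implication is not established.
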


When $G$ is split, the theorem is an inverse of the Newton-Hodge decomposition result of \cite{GHKR}.

\

Following \cite{GHKR}, we say $\tw \in \tW$ is {\it fundamental} if $I \tw I$ lies in a single $I$-$\s$-conjugacy class. By Theorem \ref{N-H} we know that each $P$-fundamental elements is fundamental. Conversely, we show that each fundamental element arises as a $P$-fundamental element for some semistandard $P$.

\begin{thm}\label{fund}
The following statements on $\tw \in \tW$ are equivalent.

(a) $\tw$ is fundamental;

(b) $\tw$ is $P$-fundamental for some semistandard $P$.

(c) $\tw$ is straight, i.e., $(I \tw\s I)^n=I (\tw\s)^n I$ for all $n \in \NN$.
\end{thm}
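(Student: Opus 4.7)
The plan is to establish the cycle $(b) \Rightarrow (a) \Rightarrow (c) \Rightarrow (b)$. The first step $(b) \Rightarrow (a)$ is immediate from Theorem~\ref{N-H}(b). For $(a) \Rightarrow (c)$, I would exploit the single-class hypothesis combined with a He--Nie style reduction of $\s$-conjugacy classes in $\tW$. Since every element of $I\tw I$ has Bruhat length $\ell(\tw)$, fundamentality of $\tw$ prevents any genuine length-decreasing reduction $\tw \to_s \s(s)\tw s$ from producing a representative of a strictly shorter double coset inside $I\tw I$, so $\tw$ must already be of minimal length in its $\s$-conjugacy class in $\tW$. By the standard characterization of minimal-length elements in their $\s$-class in terms of the Newton point, this minimality is equivalent to the length additivity $\ell(\tw\,\s(\tw)\cdots\s^{n-1}(\tw)) = n\ell(\tw)$ for all $n$, which via the Iwahori--Matsumoto product formula in the $\s$-twisted Hecke algebra is precisely the straightness condition $(I\tw\s I)^n = I(\tw\s)^n I$.

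For the decisive implication $(c) \Rightarrow (b)$, let $\nu = \nu_{\tw} \in X_*(T)_\QQ$ denote the Newton point of $\tw$; since $\tw$ is straight, $\nu$ is $\s$-invariant and is attained by $\tw\s$ directly rather than in the limit. Set $M = M_\nu$, the centralizer of $\nu$ in $G$, and let $P = MN$ be the semistandard parabolic whose unipotent radical corresponds to the roots $\alpha$ with $\langle\alpha,\nu\rangle > 0$. I would then verify two things: (i) that $\tw$ is a $P$-alcove element in the sense of \S\ref{straight}, which reduces to showing that $\tw\s$ sends the positive roots of $N$ to positive affine roots, a consequence of $\nu$ being a $\tw\s$-fixed, strictly $N$-dominant rational cocharacter; and (ii) the additional condition $\tw\s(I_M)\tw^{-1} = I_M$, which amounts to showing that $\tw$ lies in the length-zero part $\Omega_M$ of the Iwahori--Weyl group of $M$, itself a consequence of the equality $\ell(\tw) = \langle\nu, 2\rho_N\rangle$ forced by straightness (where $2\rho_N$ is the sum of the roots appearing in $N$).

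The main obstacle is the construction step $(c) \Rightarrow (b)$: although producing $P$ from $\nu$ is natural, matching the resulting data against the combinatorial definition of $P$-alcove in \S\ref{straight} requires careful handling of the $\s$-action on the affine apartment and of the Frobenius-twisting of roots in the quasi-split setting. A secondary subtlety in $(a) \Rightarrow (c)$ is the correct lifting of length-decreasing $\s$-conjugations in $\tW$ to $G(\LL)$ so as to contradict the single-class hypothesis; this is precisely the point where one genuinely uses the specific structure of fundamental elements rather than generic reduction arguments.
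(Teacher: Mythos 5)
Your cycle $(b)\Rightarrow(a)\Rightarrow(c)\Rightarrow(b)$ is organized like the paper's, and the step $(b)\Rightarrow(a)$ via Theorem \ref{N-H}(b) is fine, but both remaining implications have genuine gaps. In $(a)\Rightarrow(c)$, the reduction to minimal length is in the right spirit (the paper gets it from Lemma \ref{decrease} together with Lemma \ref{f'}), but your next assertion --- that minimal length in the $\s$-conjugacy class is \emph{equivalent} to the length additivity $\ell((\tw\s)^n)=n\ell(\tw)$ --- is false. Straight elements are of minimal length in their class, but not conversely: for split $G$ a simple affine reflection $s$ is of minimal length in its conjugacy class (all its conjugates lie in $W^a$ and have length $\ge 1$), yet $\nu_s=0$ and $\ell(s)=1\neq\<2\rho,0\>$, so $s$ is not straight. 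Thus minimality alone cannot give straightness; fundamentality must be used a second time. The paper does this by writing (Theorem \ref{min}) $\tw\approx_\s xu$ with $x\in{}^J\tW{}^{\s(J)}$ straight, $u\in W_{\s(J)}$, $W_J$ finite and $J=x\s(J)x\i$, and then passing to the parahoric $\cp$ attached to $J$ with pro-unipotent radical $\cu$: in the reductive quotient $\bar\cp$ with Borel $\bar I$, fundamentality forces $({}^{xu}\bar I\cap\bar I)\cdot_\s(xu)=xu\bar I$, hence $\dim({}^{xu}\bar I\cap\bar I)\ge\dim\bar I$, which forces $u=1$ and $\tw$ $\s$-conjugate (with equal length) to the straight element $x$. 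This finite-reductive-quotient dimension argument is the essential missing ingredient in your sketch.

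In $(c)\Rightarrow(b)$ your choice $P=P_{\nu_{\tw}}$ is exactly the paper's, but the two verifications are asserted rather than proved, and the proposed justifications do not suffice. The identity $\bar\tw\s(\nu_{\tw})=\nu_{\tw}$ holds for \emph{every} element of $\tW$, so it cannot by itself yield the $P_{\nu_{\tw}}$-alcove condition; straightness must enter geometrically. The paper first conjugates (via \cite[Theorem 2.8, Proposition 3.2]{HN1}, with $\approx_\s$ since straight elements have minimal length) to $\tw'$ with $\D\cap V_{\tw'}\neq\emptyset$, checks the alcove condition there by the hyperplane/regular-point argument from the proof of Theorem \ref{bij}, obtains ${}^{\tw'\s}I_{\nu_{\tw'}}=I_{\nu_{\tw'}}$ because $\tw'\s$ fixes the alcove $\D_{\nu_{\tw'}}$ of $M_{\nu_{\tw'}}$, and then propagates $P_\nu$-fundamentality back along each elementary $\s$-conjugation using Lemmas \ref{f} and \ref{change} together with a case analysis when $\bar s(\nu)=\nu$ (where $\ell_{\nu}(\tw_1\s)=0$ forces $\tw=\tw_1$). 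Your condition (ii) is also misstated: $\tw$ does not in general lie in $\tW_{M}$, so ``$\tw\in\Omega_M$'' is not meaningful; the correct statement is $\ell_{\nu_{\tw}}(\tw\s)=0$, and extracting it from $\ell(\tw)=\<2\rho_N,\nu_{\tw}\>$ already presupposes a length decomposition along $P_{\nu_{\tw}}$ that is only available once the alcove property is established. So the two central implications need the reduction-and-propagation machinery (or a genuine substitute), which your proposal does not supply.
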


$(b) \Rightarrow (a) \& (c)$ is proved in \cite[Proposition 6.3.1 \& 13.3.1]{GHKR}, and $(c) \Rightarrow (a)$ is proved in \cite[Proposition 4.5]{H2}. By this theorem, we can identify fundamental elements with straight elements which have been studied in detail \cite{HN1}.

\

In \cite{VW}, the fundamental elements were used to construct {\it minimal Ekedahl-Oort strata} of good reductions of a PEL type Shimura varieties. By definition, we say $x \in G(\LL)$ is {\it minimal} if $K_1 x K_1$ lies in a single $K$-$\s$-conjugacy class, where $K_1$ is the kernel of the reduction modulo $\e$ map $K=G(\FF[[\e]]) \to G(\FF)$. We obtain a characterization of minimal elements in terms of fundamental elements.

\begin{thm}\label{minele}
An element of $G(\LL)$ is minimal if and only if it lies in a $K$-$\s$-conjugacy class of some fundamental element of $\tW$. In particular, when $G$ is split, each $\s$-conjugacy class of $G(\LL)$ contains one and only one $K$-$\s$-conjugacy class of minimal elements.
\end{thm}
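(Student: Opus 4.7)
For the easy direction, suppose $\tw\in\tW$ is fundamental. By Theorem~\ref{fund}, $\tw$ is $P$-fundamental for some semistandard parabolic $P=MN$, and Theorem~\ref{N-H}(b) then says $I\tw I$ forms a single $I$-$\s$-conjugacy class. Since $K_1\subset I$, the subset $K_1\tw K_1\subset I\tw I$ lies in a single $K$-$\s$-class, so $\tw$ is minimal. Minimality is stable under $K$-$\s$-conjugation, which settles the ``if'' direction.

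For the converse, let $x\in G(\LL)$ be minimal. The plan is first to produce a $K$-$\s$-conjugate of $x$ lying in $\tW$, and second to show that the resulting $\tw\in\tW$ is necessarily fundamental. For the first step, I would write $x\in I\tw I$ via Iwahori-Bruhat and apply the pro-unipotent decomposition of $K_1$ into its $T$-part and $\tw$-positive/negative unipotent parts. Surjectivity of the twisted Lang maps on these pro-$p$ factors, combined with the minimality of $K_1 xK_1$ to keep us inside the $K$-$\s$-class, lets one successively absorb each factor by $K$-$\s$-conjugation and conjugate $x$ into a lift of $\tw$.

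For the second step, suppose $\tw$ is minimal but not fundamental, and let $L\supset T$ be the minimal semistandard Levi such that $\tw$ is $L$-permissible. Theorem~\ref{bij} says $\psi^G_{L,\tw}$ is not bijective, hence $I\tw I$ contains $I$-$\s$-orbits disjoint from $I_L\tw\s(I_L)$. A careful analysis (unpacking the Iwahori decomposition of $I_L$ inside $I$) shows this discrepancy produces two elements of $K_1\tw K_1$ lying in distinct $K$-$\s$-orbits, contradicting the minimality hypothesis.

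For the uniqueness statement in the split case, the converse direction identifies any two minimal elements in a common $\s$-conjugacy class with fundamental (hence straight, by Theorem~\ref{fund}(c)) elements $\tw_1,\tw_2\in\tW$. The description of $\s$-conjugacy classes of straight elements in \cite{HN1} then shows $\tw_1$ and $\tw_2$ can be linked by $\s$-conjugations realizable inside $K$, giving $\tw_1\sim_{K,\s}\tw_2$. The main obstacle throughout is the second step of the converse: extracting from the non-bijectivity of $\psi^G_{L,\tw}$ (Theorem~\ref{bij}) a genuine failure of $K_1\tw K_1$ to be a single $K$-$\s$-class. This requires careful control of how $K$-$\s$-conjugation relates to $I_L$-$\s$-conjugation, which is subtle because $K$ properly contains $I$ and could a priori fuse $I$-$\s$-orbits that remain distinct under $I_L$-$\s$-conjugacy.
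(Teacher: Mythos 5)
The converse is where your plan breaks down, and precisely at the point you flag as the main obstacle. Step 2 misuses Theorem \ref{bij}: that theorem says bijectivity of $\psi^G_{L,\tw}$ (for $\tw$ $L$-permissible) is equivalent to $\tw$ being a $P=MN$-alcove element with $M\subset L$, whereas fundamentality requires in addition ${}^{\tw\s}I_M=I_M$ (Theorem \ref{N-H}(b)). So non-fundamentality of $\tw$ does \emph{not} force $\psi^G_{L,\tw}$ to be non-bijective for the minimal $L$-permissible Levi; for instance, for a finite simple reflection $s\in S$ the only $L$ for which $s$ is $L$-permissible is $G$ itself, $\psi^G_{G,s}$ is trivially bijective, yet $s$ is not fundamental. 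Hence no contradiction with minimality can be extracted this way. Step 1 is also off target: a minimal $g$ may well lie in $I\tw I$ for a non-fundamental $\tw$ (minimality constrains the $K_1$-double coset, not the Iwahori cell), so ``conjugate $g$ into a lift of that $\tw$'' is the wrong goal, and the Lang-map absorption inside $I$ cannot remove the part of $I$ normalized by $\tw\s$ anyway. The paper proceeds differently: Lemma \ref{K1} (an analogue of Viehmann's truncation-of-level-$1$ theorem, proved via the reduction Theorem \ref{min'} plus an iterative convergence argument, not mere surjectivity of Lang maps) puts any $g$ into $K\cdot_\s K_1xK_1$ with $x\in{}^S\tW$; minimality of $g$ then gives $IxI\subset I\cdot_\s K_1xK_1\subset K\cdot_\s x$ (using the ``Moreover'' part of Lemma \ref{K1}), i.e.\ $x$ is $K$-fundamental; and the key Lemma \ref{KI} --- for $x\in{}^S\tW$, $K$-fundamental implies fundamental, proved via Lemma \ref{J} and an explicit Iwahori computation --- concludes. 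The restriction to ${}^S\tW$ is essential there: by Theorem \ref{K-f}, general $K$-fundamental elements need not be fundamental, so your strategy must produce a normalized representative, which is exactly what Lemma \ref{K1} does and your sketch does not.

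Two smaller points. The ``if'' direction is fine, though by the paper's definition of fundamental ($I\tw I$ a single $I$-$\s$-class) you do not need Theorems \ref{fund} or \ref{N-H} at all. For the uniqueness statement in the split case, your claim that two straight elements in a common $\s$-conjugacy class ``can be linked by $\s$-conjugations realizable inside $K$'' is not supplied by \cite{HN1} and is left unproved; this is precisely the content the paper imports from \cite[Lemma 6.11]{V}, so as written this part is also a gap.
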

The ``In particular" part can be viewed as a group-theoretic generalization of the main result of \cite{O} on minimal $p$-divisible groups. A special case was proved in \cite[Proposition 4]{VW}.

\

We also obtain a generalization of \cite[Proposition 8.9]{VW}.
\begin{prop}\label{minu}
Let $\mu$ be a minuscule cocharacter. Then each $\s$-conjugacy class intersecting with $K \mu(\e) K$ contains a fundamental element in $W \mu(\e) W$. Here $W=\N_GT(\LL)/T(\LL)$ denotes the (absolute) Weyl group of $G$.
\end{prop}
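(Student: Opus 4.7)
The plan is, given a $\s$-conjugacy class $[b]$ meeting $K\mu(\e)K$, to produce a fundamental element in $W\mu(\e)W \cap [b]$ by selecting a minimal-length representative and invoking Theorem \ref{fund}.

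First I would pass from the $K$-level to the Iwahori level. The Bruhat decomposition $K=\bigsqcup_{w\in W}IwI$ yields
\[
K\mu(\e)K \;=\; \bigcup_{y\in W\mu(\e)W} IyI,
\]
so the hypothesis $[b]\cap K\mu(\e)K\ne\emptyset$ already implies $[b]\cap IyI\ne\emptyset$ for some $y\in W\mu(\e)W$. Among all such $y$ I would choose one of minimal length, call it $y_0$, and aim to show that $y_0$ is fundamental.

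By the equivalence $(a)\Leftrightarrow(b)$ of Theorem \ref{fund}, this reduces to exhibiting a semistandard parabolic $P=MN$ for which $y_0$ is $P$-fundamental. The natural choice is $M=Z_G(\nu)$, where $\nu$ is the Newton point of $[b]$. The minuscule hypothesis is crucial here: every element of $W\mu(\e)W$ has translation part in $W\mu$, and minuscule-ness forces the $\s$-averaged Newton cocharacter of such an element to agree with $\nu$ on the central direction of $M$. This should push $y_0$ into the Iwahori-Weyl group of $M$ and render the self-normalization condition $y_0\s(I_M)y_0\i=I_M$ of Theorem \ref{N-H}(b) automatic.

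The main obstacle is the verification of the $P$-alcove inequalities for roots of $N$. The strategy is a length-reduction argument: if some inequality fails for a root $\alpha$ of $N$, I would exhibit an $I$-$\s$-conjugation by an element of the associated affine root subgroup that (i) stays within $W\mu(\e)W$ and (ii) strictly decreases the length, contradicting the minimality of $\ell(y_0)$. Controlling (i) is the delicate point: length-reducing $\s$-conjugations in $\tW$ can in general escape $W\mu(\e)W$, but minuscule-ness of $\mu$ restricts the translation part to the single $W$-orbit $W\mu$, ensuring the conjugate remains in $W\mu(\e)W$. Once the $P$-alcove condition is verified alongside the $M$-containment, Theorem \ref{N-H}(b) gives $P$-fundamentality of $y_0$, and Theorem \ref{fund} then yields that $y_0$ is a fundamental element of $W\mu(\e)W\cap[b]$, as required.
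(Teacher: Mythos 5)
Your reduction to the Iwahori level is fine ($K\mu(\e)K$ is indeed the union of the $IyI$ with $y\in W\mu(\e)W$), but the core of your argument has a genuine gap. The length-reduction step asks for a $\s$-conjugation that strictly decreases the length of the Iwahori double coset while staying inside $K\mu(\e)K$, and your justification of the second point is not correct: minuscularity says that every element \emph{of} $W\mu(\e)W$ has translation part in $W\mu$, but it says nothing about the $\s$-conjugates of such elements. Note first that $I$-$\s$-conjugation preserves the double coset $Iy_0I$, so it can never change the length; to move to a shorter coset you must conjugate by elements outside $I$. If those elements lie in $K$ (e.g.\ finite simple reflections), then staying in $K\mu(\e)K$ is automatic and has nothing to do with $\mu$ being minuscule --- but such conjugations alone are not enough to enforce the $P$-alcove inequalities, which in general require conjugation in affine directions. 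If instead you conjugate by elements outside $K$ (affine reflections, or affine root subgroups $U_a$ with $a$ negative), the result can leave $K\mu(\e)K$ altogether: already for $GL_2$, conjugating $\diag(\e,1)$ by a unipotent with entry $\e^{-1}$ lands in $K\,\diag(\e^2,\e^{-1})\,K$. So the minimality of $\ell(y_0)$ among elements of $W\mu(\e)W$ meeting the class gives you no contradiction, and the heart of the proof is missing. A second, smaller gap: the assertion that minuscularity makes the normalization condition $y_0\s(I_M)y_0\i=I_M$ of Theorem \ref{N-H}(b) ``automatic'' is unsubstantiated; that condition amounts to $\ell_M(y_0\s)=0$, which is a real constraint and in this paper is obtained only via the regular-point argument ($\D\cap V_{\tw'}\neq\emptyset$) in the proof of Theorem \ref{fund}, not formally from the Newton point.

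For contrast, the paper's proof uses minuscularity in a different and sound way: starting from any $\tw\in W\mu(\e)W$ with $C\cap I\tw I\neq\emptyset$, it shows by induction on $\ell(\tw)$ (via Theorem \ref{min}, Lemma \ref{f'}/\ref{red}-type reductions and Lemma \ref{Bruhat}) that $C$ contains a \emph{straight} element $x$ with $x\le\tw$ in Bruhat order; then, as in \cite[Proposition 8.9]{VW}, Bruhat-comparability with an element of $W\mu(\e)W$ together with equality of Kottwitz points and minuscularity of $\mu$ forces $x\in W\mu(\e)W$, and $x$ is fundamental by Theorem \ref{fund}. If you want to salvage your approach, you would need either to reproduce such a Bruhat-order comparison (so that leaving $K\mu(\e)K$ during the reduction is harmless), or to prove an invariance statement for the specific conjugations you use; as written, the invariance claim is false and the minimal-length strategy does not close.
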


Combined with \cite[Theorem 9.1]{VW}, Proposition \ref{minu} leads to a generalization of \cite[Theorem 5]{VW} and a direct proof \cite[Theorem 6]{VW}.
\begin{cor}
Let $\ca_0$ be the reduction (modulo $p$) of a Shimura variety defined in \cite{VW}. Then each Newton stratum of $\ca_0$ contains a fundamental Ekedahl-Oort stratum. In particular, all Newton strata of $\ca_0$ are non-empty.
\end{cor}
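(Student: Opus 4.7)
The plan is to combine Proposition~\ref{minu} with the dictionary of \cite[Theorem 9.1]{VW} between fundamental Ekedahl-Oort strata and $K$-$\s$-conjugacy classes of minimal elements. Fix a Newton stratum $\cn_{[b]}\subset \ca_0$; in the usual group-theoretic description of the special fibre of a PEL Shimura variety, this stratum is labelled by a $\s$-conjugacy class $[b]\subset G(\LL)$ whose intersection with $K\mu(\e)K$ is non-empty, where $\mu$ is the minuscule cocharacter attached to the Shimura datum.

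The argument then proceeds in three formal steps. First, apply Proposition~\ref{minu} to $\mu$ and $[b]$ to extract a fundamental element $\tw\in W\mu(\e)W$ lying in $[b]$. Second, invoke Theorem~\ref{minele}: fundamental elements of $\tW$ are automatically minimal as elements of $G(\LL)$, so $\tw$ is itself a minimal element contained in $[b]$. Third, appeal to \cite[Theorem 9.1]{VW}, which identifies the indexing set of fundamental Ekedahl-Oort strata with $K$-$\s$-conjugacy classes of minimal elements and asserts that the fundamental EO stratum attached to the class of $\tw$ in $K\backslash G(\LL)/_\s K$ is contained in the Newton stratum attached to $[\tw]_\s=[b]$. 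This produces a fundamental Ekedahl-Oort stratum inside $\cn_{[b]}$, which is the first assertion of the corollary.

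The ``in particular'' statement is then immediate: Ekedahl-Oort strata are non-empty by definition (they partition the entire reduction $\ca_0$), so the existence of a fundamental EO stratum inside $\cn_{[b]}$ forces $\cn_{[b]}\neq\varnothing$, proving the Manin conjecture in this setting.

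The only real obstacle is verifying that \cite[Theorem 9.1]{VW} is stated flexibly enough to accept a minimal element produced by our more general Proposition~\ref{minu}, rather than by the restricted input \cite[Proposition 8.9]{VW} originally used by Viehmann-Wedhorn. However, their argument consumes the input only through the existence of \emph{some} minimal element in each $\s$-conjugacy class meeting $K\mu(\e)K$, which is precisely what the chain Proposition~\ref{minu}$\,\Rightarrow\,$Theorem~\ref{minele} delivers; so the substitution is purely mechanical and no new geometric input is required.
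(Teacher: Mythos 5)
Your overall route is the one the paper intends: the paper derives this corollary in a single line by feeding Proposition \ref{minu} (together with Theorem \ref{minele}, fundamental $\Rightarrow$ minimal) into \cite[Theorem 9.1]{VW}, exactly as in your first three steps, and your reduction of ``each Newton stratum of $\ca_0$'' to ``each $\s$-conjugacy class meeting $K\mu(\e)K$'' is also the intended one (this uses the converse Mazur inequality, which is available in the unramified PEL setting of \cite{VW} and is implicit in the paper as well).

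The genuine gap is in your last step. You assert that Ekedahl--Oort strata are ``non-empty by definition'' because they partition $\ca_0$. That is not a valid inference: the EO strata are the fibres of an invariant map from $\ca_0$ to a combinatorially defined index set (in \cite{VW}, $K$-$\s$-conjugacy classes of elements of $K\mu(\e)K$, equivalently truncations of level $1$ in the sense of \cite{V}), and a fibre over a prescribed index may perfectly well be empty a priori; non-emptiness of EO strata is one of the nontrivial geometric theorems of \cite{VW}, not a tautology. Moreover, as you paraphrase \cite[Theorem 9.1]{VW} --- an identification of index sets plus a containment of the fundamental EO stratum in the Newton stratum of $[\tw]_\s$ --- your argument only produces a possibly empty locally closed subset inside $\cn_{[b]}$, so the ``in particular'' does not follow. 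The division of labour is really the opposite of what you wrote: the containment of the EO stratum of $\tw$ in the Newton stratum of $[\tw]_\s=[b]$ is essentially group theory (by Theorem \ref{minele}, $K_1\tw K_1$ lies in a single $K$-$\s$-conjugacy class, hence every point with this EO invariant has Newton class $[b]$), whereas the geometric input one must take from \cite[Theorem 9.1]{VW} is precisely that this EO stratum is non-empty, i.e.\ that the EO invariant attached to the minimal element $\tw$ is actually attained on $\ca_0$. With that citation corrected, your argument closes and coincides with the paper's.
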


\

At last, we consider the following natural generalization of fundamental elements. We say $\tw \in \tW$ is {\it $K$-fundamental} (resp. {\it $G(\LL)$-fundamental}) if $I \tw I$ lies in a single $K$-$\s$-conjugacy class (resp. $\s$-conjugacy class). For example, all {\it minimal length elements} in a $\s$-conjugacy class of $\tW$ are $G(\LL)$-fundamental (see \cite[Theorem 3.5]{H2}). We obtain two criteria for $K$-fundamental ($G(\LL)$-fundamental) elements in Theorem \ref{K-f}. To avoid technical details, we just list the one that uses the notion of $P$-alcove element.

\begin{thm}[Theorem \ref{K-f} (c)] \label{Kf}
$\tw \in \tW$ is $K$-fundamental (resp. $G(\LL)$-fundamental) if and only if $\tw$ is a $P_{\nu_{\tw}}=M_{\nu_{\tw}} N_{\nu_{\tw}}$-alcove element and $\tw$ is $M_{\nu_{\tw}}(\FF[[\e]])$-fundamental (resp. $M_{\nu_{\tw}}(\LL)$-fundamental).
\end{thm}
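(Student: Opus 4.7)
The plan is to prove the $K$-fundamental and $G(\LL)$-fundamental assertions in parallel, since the arguments are identical after replacing $K$ and $K_M := M_{\nu_{\tw}}(\FF[[\e]])$ by $G(\LL)$ and $M_{\nu_{\tw}}(\LL)$ throughout. I will treat the $K$-fundamental case below; set $L := M_{\nu_{\tw}}$.

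For the sufficiency direction, assume $\tw$ is a $P_{\nu_{\tw}}=LN$-alcove element and $K_L$-fundamental. Theorem~\ref{N-H}(a) supplies a bijection
\[
\psi^G_{L,\tw}: I_L \tw \s(I_L)/_\s I_L \xrightarrow{\;\sim\;} I \tw I /_\s I.
\]
The $K_L$-fundamental hypothesis forces all the (finitely many) $I_L$-$\s$-classes in $I_L \tw \s(I_L)$ into a single $K_L$-$\s$-class. Since $K_L \subset K$, these are \emph{a fortiori} in one $K$-$\s$-class, and through the bijection the $I$-$\s$-classes in $I \tw I$ are all $K$-$\s$-conjugate. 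Hence $I \tw I$ lies in a single $K$-$\s$-class, so $\tw$ is $K$-fundamental.

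For the necessity direction, assume $\tw$ is $K$-fundamental. My plan has three steps. First, I verify that $\tw$ is $L$-permissible: since elements of $I \tw I$ are mutually $K$-$\s$-conjugate they share the Newton point $\nu_{\tw}$, and a suitable positive rational multiple of $\nu_{\tw}$ provides a cocharacter $\l$ with $M_\l=L$ satisfying $\bar\tw\s(\l)=\l$. Second, I argue that $\psi^G_{L,\tw}$ is a bijection, so that Theorem~\ref{bij} applies and produces a semistandard $P=MN$ with $M \subset L$ for which $\tw$ is a $P$-alcove element; the Newton point of $\tw$, being central in $M$ by the $P$-alcove property and of type $\nu_{\tw}$, must force $M = L$, giving the desired $P_{\nu_{\tw}}$-alcove conclusion. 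Third, once the bijection $\psi^G_{L,\tw}$ is in hand, the $K$-fundamentality of $\tw$ in $G$ pulls back to a $K_L$-$\s$-class statement on $I_L \tw \s(I_L)$, and combined with $L$-permissibility one upgrades this to $K_L$-fundamentality of $\tw$ inside $M_{\nu_{\tw}}$.

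The main obstacle is step two of the necessity direction: proving surjectivity of $\psi^G_{L,\tw}$ (injectivity already follows from Lemma~\ref{injection}). Given an $I$-$\s$-class $C \subset I \tw I$, one needs to produce an element of $I_L \tw \s(I_L)$ that is $I$-$\s$-conjugate to a point of $C$. My plan is to exploit the Iwahori-factorization $K = \bigsqcup_{w \in W_0} I w I$ to decompose an arbitrary $K$-$\s$-conjugator realizing $K$-fundamentality: the $W_0$-component $w$ must satisfy $w \tw \s(w)^{-1} = \tw$ in $\tW$ modulo $I$-$\s$-conjugation (otherwise conjugation leaves $I \tw I$), and the $L$-permissibility of $\tw$ combined with the characterization of such $w$ in terms of fixers of $\nu_{\tw}$ forces $w \in W_0 \cap L$, i.e., $w \in K_L$. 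This descends the conjugation from $K$ to $K_L$ and produces the required preimage in $I_L \tw \s(I_L)$. The delicate point will be to keep track of the $\s$-twist when moving between the Bruhat cells $I w I$ and verify that only $w \in W_L$ contributes, which is exactly where the centrality of $\nu_{\tw}$ in $L$ plays its role.
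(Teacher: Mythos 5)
Your sufficiency direction is correct and is essentially the paper's own argument for (c)$\Rightarrow$(a) of Theorem \ref{K-f}: the $P_{\nu_{\tw}}$-alcove hypothesis gives bijectivity of $\psi_{M_{\nu_{\tw}},\tw}^{G}$ (Theorem \ref{N-H}(a), i.e.\ the easy direction of Theorem \ref{bij}), and $M_{\nu_{\tw}}(\FF[[\e]])$-fundamentality then collapses $I\tw I$ into a single $K$-$\s$-class. The necessity direction, however, is where the substance of the theorem lies, and there your plan has genuine gaps. The central one is your proposed proof that $K$-fundamentality makes $\psi_{L,\tw}^{G}$ surjective (with $L=M_{\nu_{\tw}}$): you take a $K$-$\s$-conjugator between two points of $I\tw I$, place it in a Bruhat cell $IwI$ with $w$ in the finite Weyl group, and claim that $w$ must satisfy $w\tw\s(w)^{-1}=\tw$ up to $I$-$\s$-conjugacy ``since otherwise conjugation leaves $I\tw I$''. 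That inference is false: $\s$-conjugation by elements of $IwI$ with $w\tw\s(w)^{-1}\neq\tw$ can perfectly well send points of $I\tw I$ back into $I\tw I$, since $(IwI)^{-1}(I\tw I)\s(IwI)$ meets many Iwahori double cosets including $I\tw I$ itself; and even if one knew $w\in W_{M_{\nu_{\tw}}}$, the conjugator lies in $IwI\not\subset M_{\nu_{\tw}}(\LL)$, so it does not produce an element of $I_{L}\tw\s(I_{L})$ that is $I$-$\s$-conjugate to the given point. The paper never argues this way: surjectivity of $\psi_{L,\tw}^{G}$ comes out only as a consequence of the theorem, whose proof of (a)$\Rightarrow$(c) goes through the intermediate combinatorial condition (b), by induction on $\ell(\tw)$ using Theorem \ref{min'} (resp.\ Theorem \ref{min} for the $G(\LL)$ case), the compatibility statements of Lemma \ref{red}, Lemma \ref{KI} and length additivity, and then deduces (c) from (b) via a parahoric argument and Lemma \ref{length}.

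Two further steps are also unsupported. First, Theorem \ref{bij} only yields that $\tw$ is a $P_{v}$-alcove element for \emph{some} $v$ with $M_{v}\subset M_{\nu_{\tw}}$; your claim that the Newton point ``forces $M=L$'' is a non sequitur (a dominant translation $t^{\mu}$ is a $P_{v}$-alcove element for $v$ dominant regular, with $M_{v}=T$ possibly strictly smaller than $M_{\mu}$). To conclude that $\tw$ is a $P_{\nu_{\tw}}$-alcove element you would need either to choose $v$ in a small neighborhood of $\nu_{\tw}$ (as the paper does in the proof of Theorem \ref{bij}) or to invoke the fact that $\<\a,\nu_{\tw}\>\ge 0$ for all $\a\in\Phi_{v,+}$ when $\tw$ is a $P_{v}$-alcove element; neither appears in your argument. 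Second, your final step passes from ``all of $I_{L}\tw\s(I_{L})$ lies in one $K$-$\s$-class'' to ``$\tw$ is $M_{\nu_{\tw}}(\FF[[\e]])$-fundamental''; this requires injectivity of the natural map from $M_{\nu_{\tw}}(\FF[[\e]])$-$\s$-classes to $K$-$\s$-classes, which Lemma \ref{injection} (an $I_{L}$ versus $I$ statement) does not provide and which you do not prove. Supplying exactly this descent is what the detour through condition (b) of Theorem \ref{K-f} (writing $\tw\s\in x\s W_{J}$ with $x$ fundamental, $x\s(J)x^{-1}=J$ and $\ell(\tw\s)=\ell(x\s)+\ell_{\nu_{\tw}}(\tw\s)$) accomplishes in the paper.
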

Note that when $P_{\nu_{\tw}}=G$, Theorem \ref{Kf} becomes a null criterion while Theorem \ref{K-f} (b) is still a valid one. See \S\ref{setupred} and \S\ref{setupLevi} for the definition of the semistandard parabolic subgroup $P_{\nu_{\tw}}$.

\

The proofs of our main results of this paper are based on ``reduction method" of Deligne and Lusztig. We show that the notions of fundamental elements, $P$-alcove elements and so on are compatible with this reduction so that the reduction is applicable. Then using some interesting properties of affine Weyl groups (Theorem \ref{min}), we reduce these problems to the cases of some special minimal length elements and verify them directly.

\

{\it Acknowledgement.} The author would like to thank Xuhua He and Chao Zhang for helpful advice and discussions.

\section{Preliminaries}
\subsection{} \label{setuproot} Keep the notations $G$, $T$, $B$ and $\tW$ as in the introduction section. Let $(X^*(T), \Phi, X_*(T), \Phi^\vee)$ be the (absolute) root datum of $G$ with $X_*(T)$ (resp. $X_*(T)$) the character (resp. cocharacter) group and $\Phi$ (resp. $\Phi^\vee$) the set of roots (resp. coroots). The Weyl group $W_0$ acts naturally on the co-characters group $X_*(T)$. Set $V=X_*(T)\otimes_\ZZ \RR$, which admits a $W$-invariant inner product $(,)$ such that $\<\a, \b^\vee\>=\frac{2(\a^\vee, \b^\vee)}{(\a^\vee, \a^\vee)}$ for any $\a, \b \in \Phi$, where $\< ,\>: \Phi \times \Phi^\vee \rightarrow \ZZ$ is the natural pairing, and $\a \leftrightarrow \a^\vee$ denotes the natural correspondence between roots and coroots. Define the norm $||\cdot||: V \to \RR$ by $v \mapsto \sqrt{(v,v)}$. We view the extended affine Weyl group $\tW \cong X_*(T) \rtimes W$ as an isometry subgroup of $V$. We still denote by $\s$ the linear isometry of $V$ induced by the Frobenius automorphism. Let $\overline \cdot: \tW \to W \subset \End(V)$ be the natural projection.

Let $\tPhi$ be the set of {\it real affine roots}, i.e., affine functions on $V$ of the form $a=\a+k: v \mapsto \<\a, v\>+k$ with $\a \in \Phi$ and $k \in \ZZ$. We call the vanishing set $H_a$ of $a$ an {\it affine root hyperplane} and the isometry reflection $s_a \in \tW$ of $V$ along $H_a$ a {\it reflection}. The subgroup $W^a \subset \tW$ generated by all reflections is called the {\it affine Weyl group} of $G$. Note that $\tW$ and $\s$ act naturally on the sets of real affine roots, affine root hyperplanes and reflections. Let $E \subset V$ be a convex subset. We say $e \in E$ is a {\it regular point} of $E$ if for any affine root hyper plane $H$ that contains $e$, one has $E \subset H$. The set of regular points of $E$ is open dense in $E$.

Let $\Phi^+ \subset \Phi$ be the set of (positive) roots occurring in $B$ and $\D=\{v \in V; -1 < \<\a, v\> <0, \a \in \Phi^+\}$ the {\it anti-dominant alcove}. Let $\tPhi^+ \subset \tPhi$ be the set of (positive) real affine roots taking positive values on $-\D$. Define the {\it length function} {\it} $\ell: \tW \to \NN$ by $\ell(\tw)=\sharp \{a \in \tPhi^+; \tw(a)<0\}$. In other words, $\ell(w)$ is the number of affine root hyperplanes which separate $\D$ from $\tw \D$. Let $S^a$ and $S$ be the set of {\it simple reflections}, i.e., reflections of lengths one, of $W^a$ and $W$ respectively. Then both $(W^a, S^a)$ and $(W, S)$ are {\it Coxeter systems}. Since $\s(B)=B$, we have $\s(S)=S$ and $\s(S^a)=S^a$. Let $\Om \subset \tW$ be the subgroup of length zero elements. We have $\tW=W^a \rtimes \Om$.

\subsection{}\label{setupLevi} Let $T \subset M \subset G$ be a semistandard Levi subgroup. With $M$ and $B \cap M$ playing the roles of $G$ and $B$ in \S \ref{setuproot} respectively, we obtain $\tW_M$, $W_M$, $\tPhi_M$, $\tPhi_M^+$, $W^a_M$, $\ell_M$, $S^a_M$, $S_M$, $\D_M$ and so on. If $z \in \tW \rtimes \<\s\>$ preserves $\tPhi_M$, we can also define $\ell_M(z)=\sharp \{a \in \tPhi_M^+; z(a)<0\}$.

Let $v \in V$. Set $\Phi_v=\{\a \in \Phi; \<\a, v\>=0\}$ and $\Phi_{v,+}=\{\a \in \Phi; \<\a, v\> >0\}$. Let $M_v \subset G$ the semistandard Levi subgroup generated by $T$ and the root subgroup $U_{\a} \subset G$ for $\a \in \Phi_v$. Let $N_v \subset G$ be the unipotent subgroup generated by root subgroups $U_{\a}$ with $\a \in \Phi_{v,+}$. Then $P_v=M_v N_v$ is a semistandard parabolic subgroup. For simplicity, we will write $\tW_v=\tW_{M_v}$, $\tPhi_v=\tPhi_{M_v}$, $\ell_v=\ell_{M_v}$ and so on. Note that each semistandard Levi subgroup (resp. parabolic subgroup) is of the form $M_v$ (resp. $P_v$) (resp. $P_v$) for some $v \in V$.

For any two alcoves $C, C'$ we write $C >_{\a} C'$ if $\<\a, C\> > k > \<\a, C'\>$ for some $k \in \ZZ$ and write $C \ge_{\a} C'$ if $C' \ngtr_\a C$. Now we give the definition of $P$-alcove element. We say $\tw \in \tW$ is a $P$-alcove element if there exists $v \in V$ such that $P=P_v$, $\bar \tw\s(v)=v$ and $\tw \D \ge_{\a} \D$ for all $\a \in \Phi_{v,+}$. Let $w \in W$ such that $w(v)$ is dominant and let $J=\{s \in S; s(w(v))=w(v)\}$. Then $\tw$ is a $P_v$-alcove element if and only if $\tw$ is a $(J, w\i, \s)$-alcove element defined in \cite[\S 3.3]{GHN}.

\begin{lem} \label{length}
Let $v \in V$ and $\tw \in \tW$ such that $\tw\s(v)=v$. Let $s \in W^a$ be a reflection such that $\bar s(v)=v$ and $\ell(s \tw)=\ell(\tw)+1$. Then $s \tw$ is a $P_v$-alcove element if and only if so is $\tw$.
\end{lem}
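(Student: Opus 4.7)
The plan is to reduce the $P_v$-alcove condition to a single geometric inequality in each direction $\a$, and to show that this inequality is preserved under $\tw \mapsto s\tw$.

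First, I observe that the vector-fixing hypothesis propagates automatically: $\overline{s\tw}\,\s(v) = \bar s\,\bar\tw\,\s(v) = \bar s(v) = v$. Thus both $\tw$ and $s\tw$ satisfy the first part of the $P_v$-alcove condition, and I only need to compare the inequalities $(\cdot)\D \ge_\a \D$ for $\a \in \Phi_{v,+}$. Writing $s = s_a$ with real affine root $a \in \tPhi$ of vector part $\b \in \Phi$, the hypothesis $\bar s = s_\b$ fixes $v$ forces $\<\b, v\> = 0$; hence $\b \in \Phi_v$, and in particular $\b$ is linearly independent of every $\a \in \Phi_{v,+}$.

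Second, I reformulate $\tw\D \ge_\a \D$ geometrically. Since $\tw\D$ is an open alcove, $\<\a, \tw\D\>$ is an open interval contained in a unique integer strip $(n, n+1)$, and unpacking the definition of $\ge_\a$ shows this condition is precisely $n \ge -1$. So ``$\tw$ is a $P_v$-alcove element'' becomes a collection of such strip conditions, one per $\a \in \Phi_{v,+}$, and it suffices to show that the strip containing $\<\a, \tw\D\>$ equals the one containing $\<\a, s\tw\D\>$ for each such $\a$.

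The key input will be an adjacency claim: under the hypothesis $\ell(s\tw) = \ell(\tw) + 1$ with $s \in W^a$ a reflection, the alcoves $\tw\D$ and $s\tw\D$ are adjacent across the hyperplane $H_a$. This is the standard wall-reflection characterization in the Coxeter group $W^a$, obtainable via strong exchange or equivalently by observing that $\ell(sw) = \ell(w) \pm 1$ for a reflection $s$ iff $s = w s_i w\i$ for some simple $s_i$; passing from $W^a$ to $\tW = W^a \rtimes \Om$ is harmless since $\Om$-elements fix $\D$. Granting the claim, the shared wall lies on $H_a$, which is a $\b$-hyperplane. Because $\a$ and $\b$ are linearly independent, the restriction of $\<\a, \cdot\>$ to the wall is non-constant and so attains a non-degenerate open range of values. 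Both $\<\a, \tw\D\>$ and $\<\a, s\tw\D\>$ must accumulate to this common range in their closures, and being each contained in a single integer strip, they necessarily lie in the same strip.

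The lemma then follows: the integer $n$ associated to each $\a \in \Phi_{v,+}$ is the same for $\tw$ and for $s\tw$, so the inequality $n \ge -1$ holds for all such $\a$ in one case if and only if it does in the other. The hard part is really just the adjacency claim, which is a Coxeter-theoretic standard; the rest is bookkeeping with integer strips together with the transversality of the wall to the $\a$-direction provided by linear independence of $\a$ and $\b$.
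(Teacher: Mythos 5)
Your reduction to integer strips is reasonable bookkeeping, but the argument breaks at the adjacency claim, which is false. Left multiplication by a reflection is not a wall-crossing: the standard Coxeter fact is that $\tw\D$ and $\tw s_i\D$ are adjacent for $s_i$ simple (right multiplication), and adjacency of $\tw\D$ and $s\tw\D$ is equivalent to $\tw\i s\tw$ being simple, which is \emph{not} implied by $\ell(s\tw)=\ell(\tw)+1$ (the equivalence you quote, ``$\ell(s\tw)=\ell(\tw)\pm 1$ iff $s=\tw s_i\tw\i$'', is incorrect). Concretely, in the rank-one affine case let $\b$ be the simple root, $s=s_{\b}$ the simple reflection in the wall of $\D$ through the origin, and $\tw$ the translation by $-\b^\vee$: then $\ell(\tw)=2$ and $\ell(s\tw)=3=\ell(\tw)+1$, but $\<\b,\tw\D\>\subset(-3,-2)$ while $\<\b,s\tw\D\>\subset(2,3)$, so the two alcoves are separated by five affine root hyperplanes. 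Embedding this in type $A_1\times A_1$ with $v$ chosen so that $\<\b,v\>=0<\<\a,v\>$ for the other root $\a$ gives a configuration satisfying every hypothesis of the lemma with $\Phi_{v,+}\neq\emptyset$ in which your adjacency claim, and hence your derivation of the same-strip statement, collapses.

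Moreover, the root-by-root invariant you aim for is not the right one: the $\a$-strip can genuinely move. In type $\tilde A_2$ take $s=s_{\a_1}$, $v$ with $\<\a_1,v\>=0<\<\a_2,v\>$, and $\tw$ the translation by $-\a_1^\vee-\a_2^\vee$; one checks $\ell(\tw)=4$, $\ell(s\tw)=5$, so all hypotheses hold, yet $\<\a_2,\tw\D\>\subset(-2,-1)$ while $\<\a_2,s\tw\D\>\subset(-3,-2)$. What $\ell(s\tw)=\ell(\tw)+1$ actually gives is weaker but sufficient: the hyperplane $H_s$ of $s$ is the unique affine root hyperplane separating both $\D$ from $s\D$ and $\tw\D$ from $s\tw\D$. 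The paper's proof uses this contrapositively: if $\D>_{\a}s\tw\D$ for some $\a\in\Phi_{v,+}$ while $\tw\D\ge_{\a}\D$, the offending $\a$-hyperplane separates $\tw\D$ from $s\tw\D$ but, having direction $\a\neq\pm\b$, cannot separate $\D$ from $s\D$; hence $s\D>_{\a}s\tw\D$, and applying $s$ yields $\D>_{\bar s(\a)}\tw\D$. Since $\bar s(v)=v$, $\bar s$ permutes $\Phi_{v,+}$, so this contradicts the $P_v$-alcove property of $\tw$ at the root $\bar s(\a)$ rather than at $\a$ --- it is this permutation of the index set, invisible to a per-root strip comparison, that makes the lemma work. (A minor additional slip: for $\a\in\Phi_{v,+}$ a negative root, $\tw\D\ge_{\a}\D$ corresponds to the strip condition $n\ge 0$, not $n\ge -1$.)
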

\begin{proof}
We only prove the ``if'' part. The ``only if'' part can be proved similarly.
Let $\a \in \Phi$ such that $\<\a, v\> > 0$. We have to show $s \tw \D \geqslant_{\a} \D$. Assume otherwise, i.e., $\D >_{\a} s \tw \D$. Since $\tw \D \geqslant_{\a} \D$, we have $\tw \D >_{\a} s \tw \D$. By assumption, $\ell(s \tw)=\ell(\tw)+1$, we see that the affine root hyperplane $H$ of $s$ is the unique affine root hyperplane that separates $\D$ from $s \D$ and separates $\tw \D$ from $s \tw \D$. Since $\bar s \neq s_{\a}$, we have $s \D >_{\a} s \tw \D$. Taking the action of $s$ yields $$\D=s(s \D) >_{\bar s(\a)} s(s \tw \D)=\tw \D,$$ which contradicts the assumption that $\tw$ is a $P_v$-alcove element.
\end{proof}

\begin{lem}\cite[Lemma 6.1]{HN2} \label{f}
Let $y \in \tW$ and $v \in V$ such that $\bar y(v)=v$ and $y$ is a $P_v$-alcove element. Let $s \in S$.

(a) if $\ell(y)=\ell(sy\s(s))$, then $sys$ is a $P_{\bar s(v)}$-alcove element;

(b) If $y > sy\s(s)$, then $\bar s(v)=v$. Moreover, both $sy\s(s)$ and $y\s(s)$ are $P_{v}$-alcove elements.
\end{lem}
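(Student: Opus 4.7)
The plan is to verify in each case the two defining conditions of a $P$-alcove element: the fixed-point condition $\bar \tw \s(w)=w$ and the alcove inequalities $\tw \D \ge_\a \D$ for all $\a \in \Phi_{w,+}$, with $w=\bar s(v)$ in part (a) and $w=v$ in part (b). For (a), the fixed-point condition for $sy\s(s)$ at $\bar s(v)$ is a direct computation,
$$\overline{sy\s(s)}\,\s(\bar s(v)) = \bar s\,\bar y\,\s(\bar s\bar s(v)) = \bar s\,\bar y\,\s(v) = \bar s(v),$$
using $\bar s^2=\Id$ and $\bar y\s(v)=v$. For the alcove inequality, fix $\a \in \Phi$ with $\<\a,\bar s(v)\> > 0$ and set $\b=\bar s(\a) \in \Phi_{v,+}$. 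Applying the isometry $s$ recasts the desired $sy\s(s)\D \ge_\a \D$ as a comparison between $y\s(s)\D$ and $s\D$ relative to $\b$; the $P_v$-alcove property of $y$ yields $y\D \ge_\b \D$, and the length equality $\ell(y)=\ell(sy\s(s))$ forces the two hyperplane flips introduced by $s$ on the left and $\s(s)$ on the right to cancel in their contribution to $\b$-separations. A case split on whether $\b$ equals $\pm\a_s$—the root defining the hyperplane of $s$—then yields the conclusion.

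For (b), the hypothesis $y > sy\s(s)$ forces $\ell(sy\s(s)) < \ell(y)$; since multiplication by a simple reflection changes length by exactly one, this drop pins down $\ell(sy)=\ell(y\s(s))=\ell(y)-1$ and $\ell(sy\s(s))=\ell(y)-2$. I first deduce $\bar s(v)=v$: if $\<\a_s,v\> \neq 0$, then $\pm\a_s \in \Phi_{v,+}$, and the $P_v$-alcove inequality for $y$ together with $\bar y\s(v)=v$ would pin $y\D$ to a side of $H_s$ incompatible with $\ell(sy) < \ell(y)$. Once $\bar s(v)=v$ is in hand, the conjugate reflections $r=(sy)\s(s)(sy)\i$ and $r'=y\s(s)y\i$ also fix $v$ (using $\bar y\i(v)=\s(v)$ and $\s(\bar s(v))=\s(v)$). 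Two applications of Lemma \ref{length}—to the length-one covers $y=s\cdot(sy)$ and $sy=r\cdot sy\s(s)$, and parallelly to $y=r'\cdot y\s(s)$—then transport the $P_v$-alcove property from $y$ to both $sy\s(s)$ and $y\s(s)$.

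The main obstacle will be the case analysis in part (a) when $\b = \pm\a_s$: there the reflection $s$ and the sign of $\<\b,v\>$ interact simultaneously, so one must pinpoint which affine hyperplane accounts for the length equality $\ell(y)=\ell(sy\s(s))$ in order to preserve the $\b$-inequality after both reflections act. In part (b), extracting $\bar s(v)=v$ directly from the length-drop hypothesis, rather than assuming it from the start, is the subtle step, and will rely on carefully comparing the hyperplane $H_s$ with the cone $\Phi_{v,+}$ via the $P_v$-alcove condition on $y$.
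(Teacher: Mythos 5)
The paper gives no proof of this lemma at all --- it is quoted verbatim from \cite[Lemma 6.1]{HN2} --- so the only fair comparison is against what a complete argument requires, and there your proposal stops short exactly at the two points where the lemma has content. In (a), the fixed-point computation is fine, and the generic case of your reduction works: if the hyperplane realizing a putative violation $\D >_\a sy\s(s)\D$ (after transporting by $s$, a violation $s\D >_{\bar s(\a)} y\s(s)\D$) is neither the wall $H_s$ nor the wall $y\s(H_s)$, one immediately contradicts $y\D \ge_{\bar s(\a)} \D$. But the exceptional cases are precisely where $\ell(y)=\ell(sy\s(s))$ and $\bar y\s(v)=v$ must enter, and you give no argument there; moreover your announced case split ``$\b=\pm\a_s$'' does not even cover the second exceptional wall $y\s(H_s)$, whose linear part is $\pm\bar y\s(\a_s)$, in general different from $\pm\a_s$. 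In (b), the step you single out as subtle, $\bar s(v)=v$, is not established by the mechanism you describe: $\ell(sy)<\ell(y)$ only says that $H_s$ separates $\D$ from $y\D$, and the $P_v$-alcove condition forbids separation in only one direction, so the single-wall argument rules out just one sign of $\<\a_s,v\>$ (with $\D$ anti-dominant, the sign $\<\a_s,v\><0$). For the other sign one genuinely needs the second length drop $\ell(y\s(s))<\ell(y)$: the wall $y\s(H_s)$ also separates $\D$ from $y\D$, its linear part $\g'=\bar y\s(\a_s)$ lies in $\Phi_{v,+}$ because $\bar y\s(v)=v$, and comparing $\<\g',y\D\>=\<\g',y\s(\D)\>=\<\a_s,\D\>+c$ with the position of $y\s(H_s)$ forces a contradiction with $y\D\ge_{\g'}\D$. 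That two-wall, $\s$-twisted comparison is the missing idea, not a refinement of the one you sketched.

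On the positive side, your treatment of the ``Moreover'' part of (b) is correct and pleasantly economical: once $\bar s(v)=v$ is known, writing $y=r'\cdot y\s(s)$ with $r'=y\s(s)y\i\in W^a$ and $y\s(s)=s\cdot sy\s(s)$, checking that $r'$, $s$ fix $v$ and that the fixed-point hypotheses hold, and invoking Lemma \ref{length} twice does transport the $P_v$-alcove property to both $y\s(s)$ and $sy\s(s)$ (your length bookkeeping $\ell(sy)=\ell(y\s(s))=\ell(y)-1$, $\ell(sy\s(s))=\ell(y)-2$ is also right). So the skeleton is viable, but as it stands the proposal defers or misstates the core case analysis that constitutes the cited lemma.
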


\subsection{}\label{setupred} Let $J \subset W^a$ and $W_J \subset W^a$ the parabolic subgroup generated by $J$. We denote by $\tW {}^J$ (${}^J \tW$) the set of minimal representatives of  cosets in $W^a / W_J$ ($W_J \backslash W^a$). We set ${}^J \tW {}^{J'}={}^J \tW \cap \tW {}^{J'}$. If $J \subset S$, we denote by $\Phi_J$ the set of roots spanned by $J$.

Let $s \in S^a$ and $\tw, \tw' \in \tW$. We say $\tw \overset s \to_\s \tw'$ if $\tw'=s \tw \s(s)$ and $\ell(\tw) \ge \ell(\tw')$. We say $\tw \to_{J,\s} \tw'$ with $J \subset S^a$ if there is a sequence $\tw=\tw_0, \dots, \tw_r=\tw'$ such that for each $i$ we have $\tw_{i-1} \overset {s_i} \to \tw_i$ for some $s_i \in J$. We say $\tw \approx_{J,\s} \tw'$ if $\tw \to_{J,\s} \tw'$ and $\tw' \to_{J,\s} \tw$. For simplicity, we will use ``$\to_\s$" and``$\approx_\s$" instead of ``$\to_{S^a,\s}$" and ``$\approx_{S^a,\s}$" respectively.

By the proof of \cite[Lemma 4.4]{H1}, we have the following observation
\begin{lem}\label{Bruhat}
Let $\tw, \tw', x \in \tW$ such that $\tw \approx_\s \tw' \geq x$. Then there exists $y$, which is $\s$-conjugate to $x$ by $W^a$, such that $y \leq \tw$ and $\ell(y) \le \ell(x)$.
\end{lem}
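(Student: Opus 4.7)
The strategy is to induct on the length of the chain witnessing $\tw \approx_\s \tw'$, reducing to the single-step case, and then to exploit the Bruhat lifting property in the Coxeter system $(W^a, S^a)$. Since $\tw \approx_\s \tw'$ forces $\ell(\tw) = \ell(\tw')$ (each $\to_\s$ step weakly decreases length, and the two directions cancel), every intermediate element $\tw_i$ in a chain $\tw = \tw_0 \to_\s \tw_1 \to_\s \cdots \to_\s \tw_r = \tw'$ has length $\ell(\tw)$, and each elementary step $\tw_{i-1} \overset{s_i}{\to}_\s \tw_i$ preserves length, hence is itself an $\approx_\s$-step. I induct on $r$: the base case $r=0$ is trivial, and the inductive step composes the individual $\s$-conjugations by simple reflections in $S^a \subset W^a$ into a single $\s$-conjugation by an element of $W^a$. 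It therefore suffices to treat the base case $\tw' = s \tw \s(s)$ for some $s \in S^a$ with $\ell(\tw) = \ell(\tw')$.

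\textbf{Single step via Bruhat lifting.} In the base case, I split into configurations according to the signs of $\ell(s\tw') - \ell(\tw')$ and $\ell(\tw'\s(s)) - \ell(\tw')$. The fully-ascent configuration $s\tw' > \tw'$ and $\tw'\s(s) > \tw'$, combined with $\ell(s\tw'\s(s)) = \ell(\tw')$, forces $\tw = \tw'$: apply the exchange condition to the reduced expression of $s\tw'$ under right multiplication by $\s(s)$; the removed letter must be the initial $s$, since otherwise the resulting reduced expression of $\tw$ would begin with $s$, contradicting $s\tw = \tw'\s(s)$ with $\ell(\tw'\s(s)) = \ell(\tw)+1$. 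In this degenerate case take $y = x$. Otherwise I may assume $s\tw' < \tw'$ (the symmetric case $\tw'\s(s) < \tw'$ is handled by swapping left and right). The length condition $\ell(\tw) = \ell(\tw')$ then gives $\tw\s(s) = s\tw'$, so $s\tw' \leq \tw$ and $\tw\s(s) < \tw$. Apply left Bruhat lifting to $x \leq \tw'$ at $s$: if $sx > x$, then $x \leq s\tw' \leq \tw$ and set $y = x$; if $sx < x$, then $sx \leq s\tw' \leq \tw$, and right Bruhat lifting at $\s(s)$ (using $\tw\s(s) < \tw$) yields $sx\s(s) \leq \tw$, so set $y = sx\s(s)$. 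The resulting $y \in \{x, sx\s(s)\}$ is $W^a$-$\s$-conjugate to $x$ (by $e$ or by $s$), and when $sx < x$ the bound $\ell(sx\s(s)) \leq \ell(x)$ follows from $\ell(sx) = \ell(x)-1$ together with $|\ell(sx\s(s)) - \ell(sx)| \leq 1$.

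\textbf{Main obstacle.} The principal difficulty lies in the bookkeeping of the single-step case: determining which of the four sign configurations can occur under the constraint $\ell(\tw) = \ell(\tw')$, locating the intermediate element $s\tw'$ or $\tw'\s(s)$ that sits below $\tw$, and applying left and right lifting in the correct order so that the output lies below $\tw$ and is a $W^a$-$\s$-conjugate of $x$ of length at most $\ell(x)$. The fully-ascent configuration is the most subtle, as no intermediate element lies below $\tw$ and one must invoke the exchange condition to conclude $\tw = \tw'$ outright rather than proceeding through a direct lifting.
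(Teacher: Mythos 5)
Your proof is correct and follows essentially the same route as the paper, which simply invokes the proof of \cite[Lemma 4.4]{H1}: that argument likewise reduces to a single length-preserving $\s$-conjugation step along the chain and then applies the one-sided lifting properties of the Bruhat order (with the fully-ascent configuration collapsing to $\tw=\tw'$). Your case analysis and length bookkeeping match that standard argument, so no further comparison is needed.
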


For $\tw \in \tW$, let $\nu_{\tw} \in V$ to be the unique element such that $(\tw\s)^n \s^{-n}=n \nu_{\tw} \in X_*(T)$ for some proper nonzero $n \in \ZZ$. We call the unique dominant element $\bar \nu_{\tw}$ of the $W$-orbit of $\nu_{\tw}$ the {\it Newton point} of $\tw$, and call the image $\k(\tw)$ of $\tw$, under the natural projection $\tW \to \Om \to \Om_{\<\s\>}$, the {\it Kottwitz point} of $\tw$.  Here $\Om_{\<\s\>}$ denotes the $\<\s\>$-coinvariant of $\Om$.

Following \cite[\S 2.2]{HN1}, we define $$V_{\tw}=\{v \in V; \tw\s(v)=v+\nu_{\tw}\}=\{\frac{1}{n}\sum_{i=0}^{n-1} (\tw\s)^i(v); v \in V\},$$ which is a nonempty $\tw\s$-stable affine subspace. Note that for any $v \in V$, $\bar \tw\s(v)=v$ if and only if $V_{\tw}=v+V_{\tw}$.

The following two theorems play a fundamental role in the paper.

\begin{thm}\cite[Proposition 2.8]{HN1} \label{min}
Let $\co$ be a $W^a$-conjugacy class of $\tW$ and $\tw \in \co$. Then there exist $\tw'$ of minimal length in $\co$, a subset $J \subset S^a$ and a straight element $x \in {}^J \tW {}^{\s(J)}$ such that

(a) $\tw \to_\s \tw' \in  x W_{\s(J)}$;

(b) The parabolic subgroup $W_J$ and $J=x \s(J) x\i$ and $\tw'=xu$;

(c) The closure $\bar \D$ of $\D$ contains a regular $e$ point of $V_{\tw'}$. So any affine root hyperplane that contains $e$ also contains $V_{\tw'}$.

In particular, we have $\tw' > x$ and $\nu_{\tw'}=\nu_x$. So $x$, $\tw'$ and $\tw$ are in the same $\s$-conjugacy class of $G(\LL)$.
\end{thm}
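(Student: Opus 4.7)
My plan is to prove the theorem in two stages: first, reduce $\tw$ via the operation $\to_\s$ to a minimal length element $\tw'$ in its $W^a$-$\s$-conjugacy class $\co$, and then at such a minimal $\tw'$ extract the data $J$, $x$, $u$ from the geometry of the Newton affine subspace $V_{\tw'}$ relative to the anti-dominant alcove $\D$. For the reduction step I would argue by induction on $\ell(\tw)$, using a Geck--Pfeiffer style criterion adapted to $\s$-twisted conjugation: if $\tw$ is not of minimal length in $\co$, then after a finite sequence of length-preserving $\s$-conjugations $\tw=\tw_0\overset{s_1}\to_\s\cdots\overset{s_k}\to_\s\tw_k$, one can find $s\in S^a$ with $\ell(s\tw_k\s(s))<\ell(\tw)$. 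Iterating yields $\tw\to_\s\tw'$ with $\tw'\in\co$ of minimal length, giving the first half of (a). Since each elementary step is realized by $\s$-conjugation by a lift of $s$ in $G(\LL)$, the elements $\tw$, $\tw'$, and eventually $x$, will lie in a single $\s$-conjugacy class in $G(\LL)$.

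At the minimal element $\tw'$, the operator $\tw'\s$ preserves $V_{\tw'}$ and acts on it by translation by $\nu_{\tw'}$. Granting statement (c), choose a regular point $e$ of $V_{\tw'}$ in $\bar\D$ and set $J=\{s\in S^a:s(e)=e\}$. By regularity, every affine root hyperplane through $e$ contains $V_{\tw'}$, so $W_J$ fixes $V_{\tw'}$ pointwise. Since $\tw'\s$ preserves $V_{\tw'}$ and carries $\bar\D$ to the closure of an alcove which also contains a regular point of $V_{\tw'}$, it conjugates $W_{\s(J)}$ to $W_J$. Decomposing $\tw'=xu$ with $u\in W_{\s(J)}$ and $x$ of minimal length in the coset $\tw'W_{\s(J)}$, I would verify $x\in{}^J\tW{}^{\s(J)}$, $x\s(J)x\i=J$, and $\tw'>x$. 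Straightness of $x$ then follows from a standard length computation, using that $x\s$ normalizes $W_J$ and that $x$ acts modulo $W_J$ as the pure translation $\nu_x=\nu_{\tw'}$, giving $\ell(x^n)=n\ell(x)$ for all $n\ge 0$.

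The hardest point is the geometric statement (c). I would establish it contrapositively: if no regular point of $V_{\tw'}$ lay in $\bar\D$, then some affine root hyperplane $H$ that separates $\D$ from $\tw'\D$ would fail to contain $V_{\tw'}$. Applying Lemma \ref{length} (or its direct analogue for the reflection $s_H$ along $H$) would then produce an $\s$-conjugate of $\tw'$ of strictly smaller length, contradicting minimality. This length-theoretic, geometric argument inside the affine Weyl group is the technical core of the theorem; once (c) is in place, the coset decomposition and straightness of $x$ reduce to standard length and parabolic bookkeeping.
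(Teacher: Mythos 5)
Your proposal does not close the hardest step, and the way it is organized is circular at the key point. Note first that the paper itself gives no proof of this theorem: it is quoted from \cite[Proposition 2.8]{HN1}, where the logic runs in the opposite direction from yours. In \cite{HN1} one first proves, by an explicit geometric induction with simple reflections (moving the alcove $\tw\D$ step by step toward the affine subspace $V_{\tw}$, controlling the number of affine root hyperplanes separating $\D$ from $V_{\tw}$), that $\tw \to_\s \tw'$ for some $\tw'$ satisfying property (c); minimality of $\tw'$ and the decomposition $\tw'=xu$ with $x$ straight are then deduced from (c) via a length identity of the shape $\ell(\tw')=\langle \bar\nu_{\tw'},2\rho\rangle+\ell_{J}(u)$. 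Your Stage~1 instead invokes ``a Geck--Pfeiffer style criterion adapted to $\s$-twisted conjugation'' to reach a minimal length element first. For the infinite group $\tW$ no such criterion is available off the shelf: the existence of length-reducing $\to_\s$-paths in affine Weyl groups is precisely the content of part (a) and was the main new contribution of \cite{HN1} (the classical Geck--Pfeiffer arguments are case-by-case for finite Coxeter groups and do not transfer). So Stage~1 assumes what is to be proved.

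The second genuine gap is your derivation of (c) from minimality. The contrapositive claim --- that if no regular point of $V_{\tw'}$ lies in $\bar\D$ then some hyperplane $H$ separating $\D$ from $\tw'\D$ fails to contain $V_{\tw'}$, and that such an $H$ yields a $\s$-conjugate of strictly smaller length --- is exactly the technical core, and nothing you cite delivers it: Lemma \ref{length} only says that the $P_v$-alcove property is preserved under left multiplication by a reflection fixing $v$ with $\ell(s\tw)=\ell(\tw)+1$; it produces no length-reducing conjugation, and conjugation by the (generally non-simple) reflection $s_H$ is not an elementary $\to_\s$ move in any case. Moreover the theorem only asserts that \emph{some} minimal length element in the $\to_\s$-orbit satisfies (c); you implicitly assert that \emph{every} minimal length element does, which you neither prove nor is needed, and which is far from obvious. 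Finally, in Stage~2 the claims $x\s(J)x\i=J$, $x\in{}^J\tW{}^{\s(J)}$ and the straightness of $x$ (i.e.\ $\ell(x)=\langle\bar\nu_x,2\rho\rangle$) are not ``standard bookkeeping'': they again rest on the regular point $e\in\bar\D\cap V_{\tw'}$ and careful length identities, so they cannot be settled before (c) is actually established.
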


\begin{thm}\cite[Corollary 2.6]{H1}\label{min'}
Let $\co$ be a $W$-conjugacy class of $\tW$ and $\tw \in \co$. Then there exists $\tw'$ of minimal length in $\co$ such that

(a) $\tw \to_{S,\s} \tw'$;

(b) There exist $J \subset S$, $u \in W_{\s(J)}$ and $x \in {}^S \tW {}^{\s(J)}$ such that $J=x \s(J) x\i$ and $\tw'=xu$;
\end{thm}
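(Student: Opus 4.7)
The plan is to argue by induction on $\ell(\tw)$, adapting the strategy of Theorem \ref{min} but permitting only $\s$-conjugation by simple reflections in $S$ rather than in $S^a$.

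For the inductive step, assume $\tw$ is not of minimal length in $\co$. The critical claim is the existence of $s \in S$ with $\ell(s\tw\s(s)) < \ell(\tw)$; induction applied to $s\tw\s(s) \in \co$ then yields the required $\tw'$. To establish the claim, I would apply Theorem \ref{min} to the larger $W^a$-$\s$-conjugacy class containing $\tw$ to obtain a length-strictly-decreasing path whose first step uses some simple affine reflection $s_a$ with $a = \a + k$. Using that $\bar{\tw}\s(\nu_{\tw}) = \nu_{\tw}$ together with Lemmas \ref{length} and \ref{f}, I would show that the linear part $s_\a$ can be conjugated into $S$ by an element of the Newton-point stabilizer $W_{\nu_{\tw}} \subset W$, and that this conjugation is realizable via length-preserving $S$-$\s$-moves on $\tw$, producing a length-decreasing move effected by a simple reflection in $S$.

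For the base case, assume $\tw$ is already of minimal length in $\co$. After further length-preserving $S$-$\s$-moves (obtained by the same replacement argument as in the inductive step, applied now to length-preserving paths in $W^a$-$\s$-conjugacy), I may assume $\nu_{\tw}$ lies in the dominant chamber. Set $J := \{s \in S : s(\nu_{\tw}) = \nu_{\tw}\}$, which is $\bar{\tw}\s$-stable by Newton-point invariance. Let $x \in {}^S\tW{}^{\s(J)}$ be the minimal length representative of the coset $\tw \cdot W_{\s(J)}$ and $u := x^{-1}\tw \in W_{\s(J)}$. Minimality of $\ell(\tw)$ in $\co$ forces $x \in {}^S\tW$, and the relation $J = x\s(J)x^{-1}$ follows by comparing the stabilizers of $\nu_{\tw}$ on each side of the decomposition $\tw = xu$.

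The main obstacle is precisely the bridging between the $W^a$-$\s$-conjugation supplied by Theorem \ref{min} and the $S$-$\s$-conjugation we are allowed. The subtlety is that an affine simple reflection $s_{\a + k}$ with $k \ne 0$ encodes a translation not visible from within $W$, so it is not a priori clear why length-strict-decreasing moves can be realized inside $S$ at all. The resolution exploits Newton-point invariance: because $\bar{\tw}\s$ fixes $\nu_{\tw}$, the affine content of a first length-strict-decrease is forced to be compatible with the translation part of $\tw$, so the length signature is carried entirely by the linear reflection $s_\a$, which can then be transported into $S$ via a $W_{\nu_{\tw}}$-conjugation. Once the inductive step is in place, the base-case construction delivers the decomposition $\tw' = xu$ of part (b) and completes the argument.
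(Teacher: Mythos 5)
Your inductive step rests on a claim whose proposed derivation cannot work. You reduce everything to: if $\tw$ is not of minimal length in the $W$-conjugacy class $\co$, then some $s\in S$ satisfies $\ell(s\tw\s(s))<\ell(\tw)$, and you propose to get this from Theorem \ref{min} applied to the $W^a$-conjugacy class, plus the invariance $\bar\tw\s(\nu_{\tw})=\nu_{\tw}$. But these inputs are available for \emph{every} $\tw$ (the Newton-point invariance is automatic, and Theorem \ref{min} only gives a non-increasing path $\to_\s$, not a strictly decreasing one), whereas the hypothesis ``not minimal in $\co$'' is never actually used in your bridging argument. Since the minimal length in $\co$ is in general strictly larger than the minimal length in the $W^a$-class, any such formal transport would prove false statements. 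Concretely, take $G=SL_2$ split with $\s$ trivial, $S=\{s_\a\}$, and $\tw=t_{2\a^\vee}s_\a$ (the reflection in the point $\a^\vee$, written with $t_{2\a^\vee}$ the translation by $2\a^\vee$). Its only $W$-conjugate is $s_\a\tw s_\a=t_{-2\a^\vee}s_\a$, of length $5>3=\ell(\tw)$, so $\tw$ is already minimal in $\co$; yet its $W^a$-class contains $s_\a$ of length $1$, so Theorem \ref{min} does supply length-decreasing moves through affine reflections $s_{\a+k}$, $k\neq 0$. Here $\nu_{\tw}=0$, so the Newton point gives no constraint at all, and no conjugation by $W_{\nu_{\tw}}$ can convert the affine drop into a drop inside $S$: conjugation by $s_{\a+k}$ with $k\neq 0$ changes the translation part of $\tw$ in a way that conjugation by $W$ cannot imitate. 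This shows the ``length signature is carried by the linear part'' heuristic is not a valid principle; any correct argument must make the hypothesis of non-minimality in $\co$ do real work, which your proposal does not. (Also note that even the weaker true statement only asserts $\tw\to_{S,\s}\tw'$, i.e.\ length-preserving steps may be needed before any strict decrease; an immediate decrease is more than what is claimed.)

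The base case has gaps of its own: you do not justify that $\nu_{\tw}$ can be made dominant by length-preserving $\to_{S,\s}$ moves, and with $J$ taken to be the full stabilizer of $\nu_{\tw}$ in $S$, neither $x\in{}^S\tW$ nor $J=x\s(J)x\i$ follows from ``comparing stabilizers''; these assertions are essentially the content of the theorem. For context, the paper does not prove Theorem \ref{min'}: it is quoted from \cite[Corollary 2.6]{H1}, where it is established by a self-contained combinatorial analysis of the partial conjugation action of the finite parabolic subgroup $W=W_S$ on $\tW$ (an induction inside the Coxeter system $(W^a,S^a)$), with no use of Newton points, of $V_{\tw}$, or of the affine-specific Theorem \ref{min}. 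Your route, as it stands, does not close.
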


\subsection{}\label{Iwahori} Let $I \subset G(\LL)$ be the Iwahori subgroup defined in the introduction. We have $$I=T(\FF[[\e]]) \prod_{a \in \tPhi^+} U_a(\FF),$$ where $U_a$ is the corresponding affine root subgroup of $a$. For $\tw \in \tW$, each $g \in I \tw I$ has a unique expression $g=g_1 g_2$, where $g_1 \in \prod_{a \in \tPhi^+, \tw\i(a)<0}U_a(\FF)$ and $g_2 \in \tw I$.

Let $s \in S^a$. Define $I^s=T(\FF[[\e]])\prod_{a \in \tPhi^+ \setminus \{a_s\}}U_a(\FF)$ and $U_s=U_{a_s}(\FF)$, where $\a_s$ denotes the positive real corresponding to $s$. Then we have the following decomposition $$I=I^s U_s=U_sI^s.$$ Note that both $I^s$ and $U_s$ are subgroups of $I$ and $I^s \cap U_s=\{1\}$. We will write $I_v=I_{L_v}=I \cap L_v(\LL)$ for $v \in V$. The following is a key observation of this paper.
\begin{lem}\label{decrease}
Let $\tw \in \tW$ and $s \in S^a$ such that $s \tw < \tw$ and $\tw \s(s) < \tw$. Then $(I \cdot_\s I^s \tw \s(I^s)) \cap \tw I \subset \tw \s(I^s)$. Here $I \cdot_\s I^s \tw \s(I^s)=\{i\i h \s(i); i \in I, h \in I^s \tw \s(I^s)\}$.
\end{lem}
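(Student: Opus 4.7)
The plan is to exploit the Iwahori decomposition $I = I^-_{\tw}\cdot I^+_{\tw}$ associated to $\tw$, where $I^-_{\tw}=\prod_{a\in\tPhi^+,\,\tw\i(a)<0}U_a$ and $I^+_{\tw}=T(\FF[[\e]])\prod_{a\in\tPhi^+,\,\tw\i(a)>0}U_a$; both sides are subgroups (by closure of each root subset under addition) with $I^-_{\tw}\cap I^+_{\tw}=\{1\}$, so every element of $I$ has a unique factorization of the form $n\cdot p$.

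First I would reduce to $i\in U_s$. Writing $i=ju$ with $j\in I^s$ and $u\in U_s$ (via $I=I^s U_s$), one has $\s(i)=\s(j)\s(u)$ with $\s(j)\in\s(I^s)$, $\s(u)\in U_{\s(s)}$ (using that $\s$ permutes $\tPhi^+$). Then
\[
i\i h\s(i)=u\i(j\i h_1)\tw(h_2\s(j))\s(u),
\]
and replacing $h_1$ by $j\i h_1\in I^s$ and $h_2$ by $h_2\s(j)\in\s(I^s)$, we may assume $i=u\in U_s$. Writing $g=u\i h_1\tw h_2\s(u)$ and using $h_2\s(u)\in I$, the condition $g\in\tw I$ is equivalent to $\tw\i u\i h_1\tw\in I$, i.e.\ $u\i h_1\in I\cap\tw I\tw\i=I^+_{\tw}$.

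The key step is to force $u=1$. Since $\tw\i(a_s)<0$ (from $s\tw<\tw$), we have $U_{a_s}\subset I^-_{\tw}$. Decompose $h_1=N_1 P_1$ uniquely with $N_1\in I^{s,-}_{\tw}:=I^-_{\tw}\cap I^s=\prod_{a\in\tPhi^+\setminus\{a_s\},\,\tw\i(a)<0}U_a$ and $P_1\in I^{s,+}_{\tw}:=I^+_{\tw}\cap I^s$; this is legal because each index set is closed under root addition (positivity plus the fact that $a_s$, being simple, is never a nontrivial positive sum). Since $u\i\in I^-_{\tw}$ and $N_1\in I^-_{\tw}$ (both subgroups), $u\i N_1\in I^-_{\tw}$, and
\[
u\i h_1=(u\i N_1)\cdot P_1
\]
is the unique $I^-_{\tw}\cdot I^+_{\tw}$ factorization. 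The condition $u\i h_1\in I^+_{\tw}$ now forces $u\i N_1=1$, i.e.\ $u=N_1$. But $u\in U_{a_s}$ while $N_1\in I^{s,-}_{\tw}$ has no $U_{a_s}$-component, and these subgroups intersect in $\{1\}$, so $u=1$ and $N_1=1$, hence $h_1=P_1\in I^{s,+}_{\tw}$.

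With $u=1$, $g=h_1\tw h_2=\tw\cdot(\tw\i h_1\tw)\cdot h_2$, and it suffices to show $\tw\i h_1\tw\in\s(I^s)$. Since $h_1\in I^{s,+}_{\tw}$, one has $\tw\i h_1\tw\in T(\FF[[\e]])\prod_{b\in\Psi^+}U_b$ where $\Psi^+=\{\tw\i(a):a\in\tPhi^+\setminus\{a_s\},\,\tw\i(a)>0\}\subset\tPhi^+$. By the hypothesis $\tw\s(s)<\tw$, $\tw\s(a_s)<0$, so $\s(a_s)\neq\tw\i(a)$ for any $a\in\tPhi^+$ (else $a=\tw\s(a_s)<0$, a contradiction); hence $\s(a_s)\notin\Psi^+$ and $\tw\i h_1\tw\in\s(I^s)$. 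Thus $g\in\tw\s(I^s)\cdot\s(I^s)=\tw\s(I^s)$.

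The main obstacle is the bookkeeping in the key step: establishing rigorously that $\{a\in\tPhi^+:\tw\i(a)<0\}\setminus\{a_s\}$ is closed under addition so that $I^{s,-}_{\tw}$ is a subgroup, and that the positive/negative factorization of $I$ restricts cleanly to $I^s$, so that the uniqueness of the $I^-_{\tw}\cdot I^+_{\tw}$ decomposition can be invoked. Once this is in place, the simplicity of $a_s$ (making $U_{a_s}\cap I^{s,-}_{\tw}=\{1\}$) and the two sign hypotheses on $\tw$ do all of the remaining work.
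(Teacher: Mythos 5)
Your argument is correct: the reduction to $i\in U_s$, the identification of the condition $g\in\tw I$ with $u\i h_1\in I\cap\tw I\tw\i=I^{+}_{\tw}$, the factorization $I^s=(I^{-}_{\tw}\cap I^s)(I^{+}_{\tw}\cap I^s)$ (justified by closedness of the root subsets and the fact that the simple affine root $a_s$ is not a sum of two positive affine roots), and the triviality of $I^{-}_{\tw}\cap I^{+}_{\tw}$ and of $U_s\cap I^s$ are all standard and used correctly, with the hypotheses $s\tw<\tw$ and $\tw\s(s)<\tw$ entering exactly where they must (to place $U_s$ inside $I^{-}_{\tw}$, and to exclude $a_{\s(s)}$ from $\tw\i(\tPhi^+)$ at the end). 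The paper states Lemma \ref{decrease} without proof, so there is no written argument to compare against, but your route via the relative Iwahori factorization recorded in \S\ref{Iwahori} (the unique expression $g=g_1g_2$ with $g_1\in\prod_{a\in\tPhi^+,\ \tw\i(a)<0}U_a(\FF)$ and $g_2\in\tw I$) is evidently the intended one.
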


\subsection{}\label{straight} By the definition in Theorem \ref{fund}, we see that $\tw \in \tW$ is straight if and only if $\ell((\tw\s)^n)=n\ell(\tw\s)=n\ell(\tw)$ for any $n \in \NN$ or equivalently, $\ell(\tw)=\<2\rho, \bar \nu_{\tw}\>$, where $\rho=\frac{1}{2}\sum_{\a \in \Phi^+}\a$. A conjugacy class of $\tW$ is {\it straight} if it contains some straight elements. Any straight element is of minimal length in its conjugacy class. We have the following refinement of Kottwitz's classification of $\s$-conjugacy classes \cite{K}.

\begin{thm}\cite[Theorem 3.7]{H2}\label{str}
The natural inclusion $\N_GT(\LL) \hookrightarrow G(\LL)$ induces a bijection between straight conjugacy classes of $\tW$ and $\s$-conjugacy classes of $G(\LL)$.
\end{thm}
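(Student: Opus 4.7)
The plan is to verify three claims --- well-definedness, surjectivity, and injectivity --- by combining Theorems \ref{fund} and \ref{min} with Kottwitz's classification \cite{K} of $\s$-conjugacy classes of $G(\LL)$ by the pair (Newton point, Kottwitz point).

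First, well-definedness is essentially immediate from Theorem \ref{fund}: every straight $\tw \in \tW$ is $P$-fundamental for some semistandard $P$, hence $I\tw I$ lies in a single $\s$-conjugacy class of $G(\LL)$. If two straight elements are $W^a$-conjugate, lifting the conjugating element to $\N_G T(\LL)$ exhibits them as $\s$-conjugate in $G(\LL)$. We therefore obtain a well-defined map from straight conjugacy classes of $\tW$ to $\s$-conjugacy classes of $G(\LL)$.

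For surjectivity, I would start with an arbitrary $g \in G(\LL)$ and use the Bruhat decomposition to write $g \in I\tw I$ for some $\tw \in \tW$. Applying the Deligne--Lusztig reduction that underlies the whole paper, I may replace $g$ by a $\s$-conjugate lying in $I\tw' I$ with $\tw' \in \tW$ of minimal length in its $W^a$-$\s$-conjugacy class. Theorem \ref{min} then furnishes a straight $x$ in the same $W^a$-$\s$-conjugacy class as $\tw'$, with $\nu_x=\nu_{\tw'}$ and $x,\tw'$ lying in the same $\s$-conjugacy class of $G(\LL)$. Hence $g$ is $\s$-conjugate to the straight element $x \in \N_G T(\LL)$.

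For injectivity, suppose straight elements $\tw_1,\tw_2 \in \tW$ are $\s$-conjugate inside $G(\LL)$. By Kottwitz's classification they share their Newton and Kottwitz points, and both invariants coincide with the ones defined intrinsically in \S\ref{setupred}. The substantive task is then to show straight conjugacy classes of $\tW$ are themselves classified by $(\bar \nu, \k)$. Given a straight $\tw$ with Newton point $\bar \nu$, Theorem \ref{min}(c) places a regular point of $V_{\tw}$ in $\bar \D$, which via a $P_{\bar \nu}$-alcove analysis (using Lemmas \ref{length} and \ref{f}) lets one conjugate $\tw$ into the Iwahori--Weyl group $\tW_{M_{\bar \nu}}$ of the centralizer Levi. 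Inside $\tW_{M_{\bar \nu}}$ the Newton direction is central, so a straight element necessarily has the shape of a pure translation in the $\bar \nu$-direction composed with a length-zero element of $\Om_{M_{\bar \nu}}$; the latter is then pinned down, up to $\s$-conjugacy, by the Kottwitz invariant.

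The main obstacle is this last reduction inside $\tW_{M_{\bar \nu}}$. The key input that makes it work is the compatibility between straightness and the $P_{\bar \nu}$-alcove structure --- the same engine behind Theorems \ref{bij} and \ref{Kf} --- while once inside $\tW_{M_{\bar \nu}}$ the residual classification reduces to tractable combinatorics of $\Om$ modulo $\s$.
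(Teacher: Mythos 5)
The paper does not prove this statement at all: Theorem \ref{str} is quoted verbatim from \cite[Theorem 3.7]{H2}, so there is no internal proof to compare with, and your proposal has to be measured against the argument in the literature ([H2, \S 3] together with [HN1, \S 3]). Your outline follows that standard route (well-definedness is in fact trivial --- a $\s$-conjugation in $\tW$ lifts to one in $\N_GT(\LL)$, so the appeal to Theorem \ref{fund} is unnecessary, and it is also the step most at risk of circularity, since the direction ``straight $\Rightarrow$ fundamental'' is itself imported from [H2]). Your surjectivity argument is essentially right, but note one small gap: after reducing $g$ into $I\tw' I$ with $\tw'$ of minimal length, you still need to know that $I\tw' I$ lies in the single $\s$-conjugacy class of $\tw'$ before Theorem \ref{min} lets you replace $\tw'$ by the straight element $x$; this is exactly the quoted fact that minimal length elements are $G(\LL)$-fundamental (\cite[Theorem 3.5]{H2}), which you should invoke explicitly.

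The genuine gap is in injectivity. Kottwitz's classification reduces you, as you say, to showing that straight $\s$-conjugacy classes of $\tW$ are separated by the pair $(\bar\nu,\k)$, and your reduction of a straight $\tw$ to a length-zero element of $\tW_{M_{\bar\nu}}$ (i.e.\ an element of $\Om_{M_{\bar\nu}}$) is fine. But the final sentence --- that such an element is ``pinned down, up to $\s$-conjugacy, by the Kottwitz invariant'' --- conflates the invariant you actually control, $\k_G \in \Om_{\<\s\>}$, with the invariant that naturally classifies length-zero elements of $\tW_{M_{\bar\nu}}$ up to $\tW_{M_{\bar\nu}}$-$\s$-conjugacy, namely $\k_{M_{\bar\nu}}$. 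The map $\pi_1(M_{\bar\nu})_\s \to \pi_1(G)_\s$ can kill torsion, and torsion classes are invisible to the Newton point, so one must argue separately that two elements of $\Om_{M_{\bar\nu}}$ with $M_{\bar\nu}$-central Newton point $\bar\nu$ and equal $\k_G$ are nevertheless $\s$-conjugate in $\tW$ (possibly by elements outside $\tW_{M_{\bar\nu}}$). This is precisely the nontrivial combinatorial content of \cite[Section 3]{HN1} (equivalently of He's proof in [H2]), and your sketch assumes it rather than proves it; as written, the injectivity half of the bijection is not established.
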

We refer to \cite[Section 3]{HN1} for more on straight conjugacy classes.

\

Throughout the paper, we use the notation ${}^x g=xgx\i$ and ${}^\s g=\s(g)$ for $x, g \in G(\LL)$. In certain circumstance, we will use the same symbol to denote an element of $\tW$ and one of its representatives in $\N_GT(\LL)$. For $X, Y \subset G(\LL)$ we set $X \cdot_\s Y=\{x\i y \s(x); x \in X, y \in Y\}$.
%{\it Terminology.}  Let $Y, Z$ be subsets or elements of $G(L)$. We write $Y \cdot_\s Z=\{y z \s(y)\i; z \in Z, y \in Y\}$ and ${}^Y Z=\{y z y\i; z \in Z, y \in Y\}$. Let $\tw \in \tW$. We always denote by $\dtw \in N_{G(L)}(A(T))$ a lift of $\tw$.

\section{Proof of Theorem\ref{bij}}
In this section we prove Theorem \ref{bij}.

\begin{lem}\label{injection}
Let $v \in V$ and $\tw \in \tW$ with $\bar \tw\s(v)=v$. Then $\psi_{M_v, \tw}^G$ is injective.
\end{lem}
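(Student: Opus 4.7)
The plan is to exploit the Iwahori factorization $I = I^{-} I_{M_v} I^{+}$, where $I^{\pm} = I \cap N_v^{\pm}(\LL)$ and $N_v$, $N_v^{-}$ are the unipotent radical of $P_v$ and its opposite. Given $h, h' \in I_{M_v} \tw \s(I_{M_v})$ with $h' = i\i h \s(i)$ and $i \in I$ decomposed as $i = u^{-} m u^{+}$, the goal is to show that $h$ and $h'$ are already $I_{M_v}$-$\s$-conjugate.

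The first step is to absorb the middle factor $m$. Because $M_v$ normalizes $N_v^{\pm}$ and everything sits inside $I$, the subgroup $I_{M_v}$ normalizes both $I^{+}$ and $I^{-}$; a direct computation then yields
$$ h' = (u^{+})\i (\tilde u^{-})\i \, h'' \, \s(\tilde u^{-} u^{+}), $$
with $\tilde u^{-} = m\i u^{-} m \in I^{-}$ and $h'' = m\i h \s(m) \in I_{M_v}\tw\s(I_{M_v})$. So it suffices to prove the following reduced claim: if $u^{\pm} \in I^{\pm}$ and $h^{\ast} := (u^{+})\i (u^{-})\i h \s(u^{-} u^{+})$ lies in $I_{M_v}\tw\s(I_{M_v})$, then $h^{\ast} = h$; the desired $I_{M_v}$-$\s$-conjugacy will follow at once, since $h' = h'' = m\i h \s(m)$.

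The main ingredient for the reduced claim is a big-cell uniqueness argument. Since $\bar\tw\s(v) = v$, the map $x \mapsto \tw\s(x)\tw\i$ preserves $M_v$, $N_v$ and $N_v^{-}$; in particular $I_{M_v}\tw\s(I_{M_v}) \subset M_v(\LL)\cdot\tw$, so write $h = m_h \tw$ with $m_h \in M_v(\LL)$. Setting $n^{\pm} = \tw\s(u^{\pm})\tw\i \in N_v^{\pm}(\LL)$, a short rearrangement that uses the normalization of $N_v^{\pm}$ by $M_v$ gives
$$ h^{\ast}\tw\i \;=\; (u^{+})\i \cdot \big((u^{-})\i m_h n^{-}\big) \cdot n^{+} \;\in\; M_v(\LL). $$
The uniqueness of the big-cell factorization $N_v^{-} \times M_v \times N_v \to N_v^{-} M_v N_v$ (equivalently $P_v \cap N_v^{-} = \{1\}$ and $M_v \cap N_v = \{1\}$), applied successively to the $N_v^{-}$- and $N_v$-sides, then forces the whole product to collapse: first to $m_h n^{-} = u^{-} m_h$, and then to $m_h n^{+} = u^{+} m_h$, so that $h^{\ast}\tw\i = m_h$, i.e., $h^{\ast} = h$.

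The main obstacle will be the technical bookkeeping in Step 3: one must verify carefully that $I_{M_v}$ indeed normalizes $I^{\pm}$ (using the explicit affine-root description of $I$ from \S \ref{Iwahori} together with the normalization of $N_v^{\pm}$ by $M_v$) and that $\tw\s$-conjugation preserves each of $M_v$, $N_v$, $N_v^{-}$ (a consequence of $\bar\tw\s(v) = v$ via the corresponding statements on the root sets $\Phi_v$ and $\Phi_{v,+}$). Once these normalization facts are in hand, the Bruhat uniqueness collapses $h^{\ast}$ to $h$ as above, yielding injectivity.
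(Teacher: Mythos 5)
Your argument is correct, and it is a genuine, self-contained alternative to what the paper does: the paper simply refers to the last statement of \cite[Theorem 2.1.2]{GHKR}, isolating as the key fact the Iwahori decomposition of ${}^{\tw\s}I \cap I$ into its $N_v$-, $M_v$- and $N_{-v}$-parts, whereas you never use that twisted intersection. Instead you use the Iwahori factorization of $I$ itself together with uniqueness of the big-cell decomposition $N_{-v}(\LL)\times M_v(\LL)\times N_v(\LL)\to G(\LL)$ applied after transporting the conjugator across $\tw\s$ (which preserves $M_v$, $N_v$, $N_{-v}$ precisely because $\bar\tw\s(v)=v$). The bookkeeping you flag is harmless: $I_{M_v}$ normalizes $I^{\pm}$ simply because $I$ is a group and $M_v$ normalizes $N_{\pm v}$, and the factorization $I=I^{-}I_{M_v}I^{+}$ follows from the affine-root product description of $I$ in \S\ref{Iwahori}. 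Your route even yields a slightly stronger conclusion than injectivity requires, namely that $\s$-conjugation by the unipotent factors cannot move an element of $M_v(\LL)\tw$ to a different element of $M_v(\LL)\tw$, so that $h'=m\i h\s(m)$ on the nose; the trade-off is that the paper's citation also packages the surjectivity direction (under the $P$-alcove hypothesis) in the same framework, while your argument is tailored to injectivity alone, which is all the lemma asserts.
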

\begin{proof}
The proof is similar to that of the last statement of \cite[Theorem 2.1.2]{GHKR}. The key fact is the following Iwahori decomposition $${}^{\tw\s}I \cap I= ({}^{\tw\s}I_{N_v} \cap I_{N_v}) ({}^{\tw\s}I_{M_v} \cap I_{M_v}) ({}^{\tw\s} I_{N_{-v}} \cap I_{N_{-v}}).$$ Here $I_{N_v}=I \cap N_v(\LL)$ and $I_{N_{-v}}=I \cap N_{-v}(\LL)$.
\end{proof}

\begin{lem}\cite[Lemma 3.3]{GHN} \label{change}
Let $v \in V$ and $s \in S^a$ such that $\bar s(v) \neq v$. Then we have $s \D_v=\D_{\bar s(v)}$. In particular, $s(\Phi_v^+)=\Phi_{\bar s(v)}^+$.
\end{lem}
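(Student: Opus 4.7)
The plan is to establish the alcove identity $s\D_v=\D_{\bar s(v)}$ directly, and then deduce the ``in particular'' root-system equality from it. Concretely I would recognize both $s\D_v$ and $\D_{\bar s(v)}$ as alcoves of the same $M_{\bar s(v)}$-affine hyperplane arrangement, and then force them to coincide by exhibiting a common open sub-region.

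The first step is to observe that the linear part $\bar s$ restricts to a bijection $\Phi_v\to\Phi_{\bar s(v)}$: since $\bar s$ preserves the natural pairing $\Phi\times V\to\RR$, one has $\alpha\in\Phi_v \iff \langle\alpha,v\rangle=0 \iff \langle\bar s(\alpha),\bar s(v)\rangle=0 \iff \bar s(\alpha)\in\Phi_{\bar s(v)}$. Consequently $s$ carries each hyperplane $H_{\alpha,k}$ with $\alpha\in\Phi_v$ to some hyperplane $H_{\bar s(\alpha),k'}$ with $\bar s(\alpha)\in\Phi_{\bar s(v)}$, so the $M_v$-hyperplane arrangement is mapped bijectively onto the $M_{\bar s(v)}$-arrangement; in particular $s\D_v$ is an $M_{\bar s(v)}$-alcove.

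The second step is to locate the common sub-region. Writing the simple affine root of $s$ as $a_s=\bar\alpha_s+k_s$, the hyperplane $H_{a_s}$ is the unique wall separating $\D$ from $s\D$. The hypothesis $\bar s(v)\ne v$ translates into $\langle\bar\alpha_s,v\rangle\ne 0$, i.e., $\bar\alpha_s\notin\Phi_v$; and via the identity $\langle\bar\alpha_s,\bar s(v)\rangle=-\langle\bar\alpha_s,v\rangle$ we also get $\bar\alpha_s\notin\Phi_{\bar s(v)}$. So $H_{a_s}$ is not a wall of $\D_{\bar s(v)}$, and the two $G$-alcoves $\D$ and $s\D$ lie in the same $M_{\bar s(v)}$-alcove, which must be $\D_{\bar s(v)}$ since $\D\subset\D_{\bar s(v)}$. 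Combined with the trivial inclusion $s\D\subset s\D_v$ coming from $\D\subset\D_v$, the two $M_{\bar s(v)}$-alcoves $s\D_v$ and $\D_{\bar s(v)}$ share the open region $s\D$ and hence coincide. The ``in particular'' statement then follows by matching the walls of the two identified alcoves under $s$.

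The main (mild) obstacle is isolating the content of the hypothesis $\bar s(v)\ne v$ as the statement $\bar\alpha_s\notin\Phi_v\cup\Phi_{\bar s(v)}$, after which everything reduces to elementary alcove bookkeeping with no Coxeter-combinatorial input required.
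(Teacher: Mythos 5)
A preliminary remark: the paper gives no proof of this lemma at all --- it is imported wholesale as \cite[Lemma 3.3]{GHN} --- so your argument can only be judged on its own terms. For the main identity $s\D_v=\D_{\bar s(v)}$ your proof is correct and complete: $\bar s(\Phi_v)=\Phi_{\bar s(v)}$, so $s$ carries the affine hyperplane arrangement of $M_v$ onto that of $M_{\bar s(v)}$ and $s\D_v$ is an $M_{\bar s(v)}$-alcove; the unique hyperplane separating $\D$ from $s\D$ is the fixed hyperplane of $s$, whose linear part $\a_s$ lies in neither $\Phi_v$ nor $\Phi_{\bar s(v)}$ precisely because $\bar s(v)\neq v$; hence $s\D\subset\D_{\bar s(v)}$, while trivially $s\D\subset s\D_v$, and two alcoves of one arrangement sharing a nonempty open set coincide. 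This is the natural argument and I have no objection to it.

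The gap is your final sentence. ``Matching the walls under $s$'' does not yield $s(\Phi_v^+)=\Phi_{\bar s(v)}^+$, and this is exactly where the delicacy sits. If one reads $\Phi_v^+$ as the finite positive system $\Phi_v\cap\Phi^+$ with $s$ acting through $\bar s$, the assertion fails when $s$ is the affine simple reflection: in type $A_2$ let $\tilde\a=\a_1+\a_2$ be the highest root, $s\in S^a$ the reflection in the wall $\<\tilde\a,\cdot\>=-1$ of $\D$, and $v$ with $\<\a_1,v\>=0\neq\<\a_2,v\>$; then $\Phi_v^+=\{\a_1\}$ and $\Phi_{\bar s(v)}^+=\{\a_2\}$, but $\bar s(\a_1)=-\a_2$, even though $s\D_v=\D_{\bar s(v)}$ does hold. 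So no amount of wall bookkeeping can prove that reading. What the lemma is actually used for later (for instance ${}^{s}I_{\bar s(v)}=I_v$ in the proof of Lemma \ref{f'}) is the affine-root version, and that does follow from your alcove identity, but only by transporting positivity along the alcoves themselves: an affine root $a$ with linear part in $\Phi_v$ is positive on $\D_v$ if and only if $s(a)$ is positive on $s\D_v=\D_{\bar s(v)}$. Note this is not the same as measuring positivity on $-\D_v$ (the convention stated in \S\ref{setuproot} for $\tPhi^+$): since $s$ is affine, $s(-\D_v)\neq-s(\D_v)$, and in the $A_2$ example $s$ sends the affine root $\a_1$ to $-\a_2-1$. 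To close the argument you must therefore say explicitly which positive system the ``in particular'' refers to and derive it by this one-line transport of positivity along $\D_v\mapsto s\D_v$, rather than by matching walls.
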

%\begin{proof} Let $p \in \D_v$ and let $\b \in \Phi_{\bar s(v)}^+$. We have to show that $-1 < \<\b, s(p)\> < 0$. There are two cases as follows. Case 1 $s=s_{\a}$ with $\a \in \Phi^+$ a simple root. Since $\bar s(v)=s_{\a}(v) \neq v$, then $\b \neq \a$ and $s(\b) \in \Phi_v^+$. So $-1 < \<\b, s(p)\>=\<s(\b), p\> <0$. Case 2 $s= (-\th^\vee) s_{\th}$ with $\th \in \Phi^+$ a longest root. If $\<\b, \th^\vee\>=0$, then $\b=s_{\th}(\b) \in \Phi_v^+$ and $$-1 < \<\b, s(p)\>=\<\b, s_{\th}(p)-\th^\vee\>=\<\b, p\> < 0.$$ Otherwise we have $\<\b, \th^\vee\>=1$ since $\th \in \Phi^+ - \Phi_v^+$ is a longest root. Then $-s_{\th}(\b)=\th-\b \in \Phi_v^+$ and $$-1 < \<\b, s(p)\>=-\<\b, \th^\vee\>-\<-s_{\th}(\b), p\>=-1-\<-s_{\th}(\b), p\> < 0.$$ \end{proof}

\begin{lem}\label{f'}
Let $v \in V$ and $\tw \in \tW$ such that $\bar \tw\s(v)=v$. Let $s \in S^a$.

(a) If $\ell(\tw)=\ell(s \tw \s(s))$, then $\psi_{M_v, \tw}^G$ is surjective if so is $\psi_{M_{\bar s(v)}, s \tw \s(s)}^G$.

(b) If $s \tw \s(s) > \tw$ and $\bar s(v)=v$, then $\psi_{M_v, s \tw \s(s)}^G$ is surjective if and only if both $\psi_{M_v, \tw \s(s)}^G$ and $\psi_{M_v, \tw}^G$ are surjective.
\end{lem}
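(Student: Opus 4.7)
Plan. Both parts are proved by the Iwahori reduction method, using the decompositions $I = I^s U_s = U_s I^s$ from \S\ref{Iwahori} together with Lemma \ref{decrease}, and by verifying that the analogous reductions on the Levi $M_v(\LL)$ are compatible with those on $G(\LL)$.

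For (a), the length equality $\ell(\tw) = \ell(s\tw\s(s))$ ensures that $\s$-conjugation by a representative $\ds$ of $s$ induces a bijection $\Phi_s \colon I\tw I /_\s I \to I s\tw\s(s) I /_\s I$. To see this bijection restricts to a bijection $I_{M_v}\tw\s(I_{M_v}) /_\s I_{M_v} \to I_{M_{\bar s(v)}} s\tw\s(s) \s(I_{M_{\bar s(v)}}) /_\s I_{M_{\bar s(v)}}$ I split into two cases. If $\bar s(v) \neq v$, Lemma \ref{change} gives $s(\Phi_v^+) = \Phi_{\bar s(v)}^+$, so $\ds$ conjugates $M_v$ to $M_{\bar s(v)}$ and $I_{M_v}$ to $I_{M_{\bar s(v)}}$. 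If $\bar s(v) = v$, then $s$ lies in the affine Weyl group of $M_v$, and the restriction happens entirely inside $M_v(\LL)$. Either way the resulting square commutes, so surjectivity of the $\psi$-map on the $s\tw\s(s)$-side transfers to the $\tw$-side.

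For (b), set $\tw' = s\tw\s(s)$. The hypothesis $\tw' > \tw$ combined with the parity $\ell(\tw') - \ell(\tw) \in \{0,\pm 2\}$ forces $\ell(\tw') = \ell(\tw)+2$, so both $s\tw' = \tw\s(s) < \tw'$ and $\tw'\s(s) = s\tw < \tw'$ hold, placing us in the setting of Lemma \ref{decrease} for $\tw'$. Combining that lemma with $SL_2$-type root-subgroup identities that move $U_s$- and $\s(U_s)$-factors across $\tw'$, I would show that every $I$-$\s$-class in $I\tw' I$ either meets $I^s \tw' \s(I^s)$ or can be reduced to a class in $I\tw I$ or in $I\tw\s(s) I$. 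Since $\bar s(v) = v$, the reflection $s$ belongs to the affine Weyl group of $M_v$, so the analogous decomposition of $I_{M_v} \tw' \s(I_{M_v})$ into its $I_{M_v}^s$- and $U_{s, M_v}$-parts is respected by the moves above, and chasing these reductions in both directions yields the stated equivalence.

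The main obstacle is verifying compatibility of the $G$-level reduction with the $M_v$-level one — i.e., showing that the moves used to pass between $I\tw' I$, $I\tw I$, and $I\tw\s(s) I$ interact well with the Iwahori decomposition ${}^{\tw'\s}I \cap I = ({}^{\tw'\s}I_{N_v} \cap I_{N_v})({}^{\tw'\s}I_{M_v} \cap I_{M_v})({}^{\tw'\s}I_{N_{-v}} \cap I_{N_{-v}})$ used in the proof of Lemma \ref{injection}. The unipotent-radical pieces must be absorbed into the $I$-$\s$-conjugation, leaving the entire obstruction on the $M_v$-piece, where the surjectivity of the Levi-level $\psi$ applies.
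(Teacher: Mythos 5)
Your outline follows the same general reduction as the paper (conjugation along the simple affine reflection $s$, the decomposition $I=I^sU_s$, Lemma \ref{change} to compare $I_v$ and $I_{\bar s(v)}$), but in part (a) the step you treat as automatic is exactly where all the work lies. Conjugation by $\ds$ does \emph{not} carry $I\tw I$ into $I s\tw\s(s) I$: it lands in $I\ds I\cdot \tw\s\cdot I\s(\ds)I$, which also meets the longer coset $Is\tw I$. So the asserted bijection $I\tw I/_\s I\to Is\tw\s(s)I/_\s I$ only makes sense after one first $\s$-conjugates a given element of $\tw\s I$ by a suitable element of $U_s$ so that it lies in $\tw\s I^s$, and, more importantly for transferring surjectivity, after one checks that the conjugator $i\in I$ produced by the assumed surjectivity of $\psi_{M_{\bar s(v)},\,s\tw\s(s)}^G$ can be taken in $I^s$ (this uses $\tw\s(s)<s\tw\s(s)$), so that ${}^{s}i\in I$ and the element can be conjugated back into $I\tw I$. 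Without these element-level verifications your commuting square is not known to commute: $\ds$-conjugation does not preserve $I$-$\s$-conjugacy, since ${}^{\ds}U_s\not\subset I$. These verifications are precisely the content of the paper's proof of (a) and are absent from the proposal.

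Part (b) is where the gap is more serious: you state a plan rather than an argument. The claimed trichotomy (every $I$-$\s$-class in $I\tw'I$ meets $I^s\tw'\s(I^s)$ or reduces to $I\tw I$ or $I\tw\s(s)I$) is not proved, and its first alternative does not connect to the hypotheses, which concern only $\psi_{M_v,\tw}^G$ and $\psi_{M_v,\tw\s(s)}^G$; a class meeting $I^s\tw'\s(I^s)$ still has to be shown to meet $I_v\tw'\s(I_v)$, and nothing in your sketch does this. Lemma \ref{decrease} is the wrong tool here: it gives an upper bound on where $I$-$\s$-conjugates of $I^s\tw'\s(I^s)$ can meet $\tw'I$, and in the paper it is used to prove \emph{non}-surjectivity (in the proofs of Theorems \ref{bij} and \ref{fund}), not to transfer surjectivity. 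Similarly, your closing paragraph about absorbing the unipotent-radical factors of ${}^{\tw'\s}I\cap I$ describes the Hodge--Newton absorption underlying Theorem \ref{N-H}(a), which requires a $P$-alcove hypothesis that Lemma \ref{f'} does not assume; it is neither available nor needed here. What both directions of (b) actually require is the explicit chase carried out in the paper: reduce to elements of the form $\tw'\s j$ (resp.\ $\tw\s s\,j$, $\tw\s j$) with $j\in I$, split according to whether $j\in I^s$, conjugate by a suitable element of $U_s$ and by $\ds$ so as to land in the double coset where the assumed Levi-level surjectivity applies, then decompose the resulting conjugator as $u_si_s$ with $u_s\in U_s$, $i_s\in I^s$ and check that conjugating back stays in $I$ while the Levi part stays in $I_v$ (this is where $\bar s(v)=v$, i.e.\ $a_s\in\tPhi_v$, enters). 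None of these steps is carried out in the proposal, so as it stands it does not constitute a proof.
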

\begin{proof}
For simplicity, we will use the ordinary conjugation action of $G(\LL)$ on $G(\LL)\s \subset G(\LL) \rtimes \<\s\>$ instead of the $\s$-conjugation action of $G(\LL)$ on itself since they are equivalent. Note that ${}^\s I =I$.

(a) Without of loss of generality, we can assume $\tw \neq s \tw \s(s)$ and $s \tw > \tw > \tw \s(s)$. To prove $\psi_{M_v, \tw}^G$ is surjective, it suffices to show that for any $j \in I$, $\tw\s j$ is conjugate by $I$ to some element of $I_v \tw\s I_v$. Conjugating $\tw\s j$ by an appropriate element of $U_s$ we can assume $\tw\s j \in \tw\s I^s$. Then $s\i \tw\s j s \in s \tw \s s I$. Since $\psi_{M_{\bar s(v)}, s \tw \s(s)}^G$ is surjective, there exist $i \in I$ and $i_{\bar s(v)} \in I_{\bar s(v)}$ such that $i\i s\i \tw\s s i_{\bar s(v)} i=s\i \tw\s j s \in s \tw \s s I$. Hence $i \in I^s$ because $\tw\s s < s \tw\s s$. Now we have $\tw\s j={}^s i\i \ \tw\s \ {}^s i_{\bar s(v)} \ {}^{\ds}i \in I \tw\s I$ with ${}^s i \in I$. It remains to show $\tw\s\ {}^s i_{\bar s(v)} \in I_v$. If $\bar s(v) \neq v$, then ${}^s i_{\bar s(v)} \in {}^s I_{\bar s(v)}=I_v$ by Lemma \ref{change}. Otherwise, $i_{\bar s(v)} \in I_v$ and $\tw\s \ {}^s i_{\bar s(v)} \in I_v \tw\s I_v$. Therefore (a) is proved.

(b) ($\Rightarrow$) To prove $\psi_{M_v, \tw \s(s)}^G$ is surjective, it suffices to show that for any $j \in I$, $\tw\s s j$ is conjugate by $I$ to  some element of $I_v \tw\s s I_v$. Conjugating $\tw\s s j$ by an appropriate element of $U_s$, we may assume $j \in I \backslash I^s$. Now $s\i \tw\s s j s \in I^s s\tw\s s I$. Since $\psi_{M_v, s \tw \s(s)}^G$ is surjective, there exist $i_v \in I_v$ and $i \in I$ such that $i\i s\i \tw\s s i_v i=s\i \tw\s s j s \in I^s s\tw\s s I$. Hence $i \in I^s$ because $\tw\s s < s \tw\s s$. Then $\tw\s s j={}^s i\i \ \tw\s s i_v s\i \ {}^s i$ with ${}^s i \in I$. Hence $\tw\s s i_v s\i \in I_v \tw\s s I_v$ as desired.

To prove $\psi_{M_v, \tw}^G$ is surjective, it suffices to show that for any $j \in I$, $\tw\s j$ is conjugate by $I$ to some element of $I_v \tw\s I_v$. Conjugating $\tw\s j$ by an appropriate element of $U_s$, we may assume $j \in I^s$. Then $s\i \tw\s j s \in s\tw\s s I^s$. Since $\psi_{M_v, s \tw \s(s)}^G$ is surjective, there exist $i_v \in I_v$ and $i \in I$ such that $s\i \tw\s j s=i\i s\i \tw\s s i_v i$. Then $i \in I^s$ and $\tw\s j={}^s i\i \ \tw \s s i_v s\i \ {}^s i$ with ${}^{\ds}i \in I$. Hence $\tw\s s i_v s\i \in I_v \tw\s I_v$ as desired.

(b) ($\Leftarrow$) To prove $\psi_{M_v, s \tw \s(s)}^G$ is surjective, it suffices to show that for any $j \in I$, $s\tw\s s\i j$ is conjugate by $I$ to some element of $I_v s\tw\s s I_v$. If $j \notin I^s$, then $\tw\s s\i j s \in I \tw\s s I$. Since $\psi_{M_v, \tw \s(s)}^G$ is surjective, there exist $i_v \in I_v$ and $i \in I$ such that $\tw\s s\i j s=i\i \tw\s s i_v i$. Write $i=u_s i_s$ with $u_s \in U_s$ and $i_s \in I^s$. Then ${}^{s}i_s\i \ s u_s\i \tw\s s i_v u_s s\i \ {}^{s}i_s=s\tw\s s\i j \in I s\tw\s s I$. We derive that ${}^{s}i_s \in I$ and $s u_s\i \tw\s s i_v u_s s\i \in I_v s\tw\s s I_v$ as desired. If $j \in I^s$, then $\tw\s s\i j s \in \tw I^s$. Since $\psi_{M_v, \tw}^G$ is surjective, there exist $i_v \in I_v$ and $i \in I$ such that $\tw\s s\i j s=i\i \tw\s i_v i$. Write again $i=u_s i_s$ with $u_s \in U_s$ and $i_s \in I^s$. Then ${}^{s}i_s\i \ s u_s\i \tw\s i_v u_s s\i \ {}^{s}i_s=s\tw\s s\i j \in I s\tw\s s I$. We derive that $s u_s\i \tw\s i_v u_s s\i \in I_v s\tw\s s I_v$ as desired.
\end{proof}

\begin{proof}[Proof of Theorem \ref{bij}]
$(\Leftarrow)$ By Theorem \ref{N-H} (a), $\psi_{M, \tw}^G$ is a surjection which factors through $\psi_{L, \tw}^G$. Hence so is $\psi_{L, \tw}^G$. The injectivity of $\psi_{L, \tw}^G$ follows from to Lemma \ref{injection}.

$(\Rightarrow)$ We argue by induction on the length of $\tw$. First we show the theorem is true if $\tw$ is of minimal length in its $\s$-conjugacy class. By Theorem \ref{min}, there exists $\tw'$ such that $\tw \approx_\s \tw'$ and the closure $\bar \D$ contains a regular point $e$ of $V_{\tw'}$. Due to Lemma Lemma \ref{f'} (a) and \ref{f} (a), we can assume $\tw=\tw'$. Choose $v \in V$ in a sufficiently small neighborhood of $\nu_{\tw}$ such that $v$ is a regular point of vector subspace spanned by $\l$ and $\nu_{\tw}$. Then we have $\bar \tw\s(v)=v$ (recall that by assumption, $\bar \tw\s(\l)=\l$) and $M_v \subset M_\l=L$. We show that $\tw$ is a $P_v$-alcove element. It remains to prove that for any $\a \in \Phi$ with $\<\a ,v\> >0$, we have $\tw \D \ge_\a \D$.
Assume otherwise. Then there exists $k \in \ZZ$ such that $\<\a, \tw \D\> < k < \<\a, \D\>$. Since $e \in \bar \D$ and $e+\nu_{\tw}=\tw\s(e) \in \tw \bar \D$, we have $\<\a, e+\nu_{\tw}\> \le k \le \<\a, e\>$ and hence $\<\a, \nu_{\tw}\> \le 0$. On the other hand, $\<\a, \nu_{\tw}\> \ge 0$ since $v$ lies in a sufficiently small neighborhood of $\nu_{\tw}$. So $\<\a, \nu_{\tw}\>=0$ and $\<\a, e\>=k$. In other words, $e$ lies on the affine root hyperplane $H$ of $\a-k \in \tPhi$. Since $e$ is a regular point of $V_{\tw}$, we have $V_{\tw} \subset H$. Hence $\<\a, v\>=0$ since $V_{\tw}=v+V_{\tw}$, which is a contradiction.

Now we assume the theorem holds for any element whose length is strictly less than that of $\tw$. We assume $\tw$ is not a minimal length element in its conjugacy class. By Theorem \ref{min}, there exist $\tw_1$ and $s \in S^a$ such that $\tw \approx_\s \tw_1$ and $s \tw_1\s s < \tw_1\s$. By the same reason above, we can assume $\tw=\tw_1$. We show that $\bar s(\l)=\l$. Assume otherwise. Then $I_L=I_{M_\l} \subset I^s$. By Lemma \ref{decrease}, $(I \cdot_\s I_L \tw \s(I_L)) \cap \tw I \subset \tw \s(I^s)$, contradicting the fact that $\psi_{L, \tw}^G$ is surjective. Hence by Lemma \ref{f'} (b), $\psi_{L, \tw \s(s)}^G$ and $\psi_{L, s \tw \s(s)}^G$ are also surjective. By induction hypothesis, there exists $v' \in V$ (resp. $v'' \in V$) such that $M_{v'} \subset L$ (resp. $M_{v''} \subset L$) and $\tw \s(s)$ (resp. $s \tw \s(s)$) is a $P_{v'}$-alcove (resp. $P_{v''}$-alcove) element. We also assume that $\bar \tw \s \bar s(v')=v'$ and $\bar s \bar \tw \s \bar s(v'')=v''$. By \cite[Lemma 4.4.4]{GHN}, either $\bar s(v')=v'$ or $\bar s(v'')=v''$. Thanks to \cite[Lemma 4.4.2]{GHN}, $\tw$ is either a $v'$-alcove element or a $v''$-alcove element as desired.
\end{proof}

\section{Fundamental elements}
In this section, we prove the main results on fundamental elements.

\begin{proof}[Proof of Theorem \ref{fund}]
$(a) \Rightarrow (c)$ Let $\tw \in \tW$ be fundamental. We first show that $\tw$ is of minimal length in its $\s$-conjugacy class. Assume otherwise. By Theorem \ref{min}, there exists $\tw'$ and $s \in S$ such that $\tw \approx_\s \tw'$ and $\tw'\s > s \tw'\s s$. By Lemma \ref{f'}, $\tw'$ is also fundamental. Applying Lemma \ref{decrease}, we see that $I \tw' I$ can not be contained in a single $I$-$\s$-conjugacy class, which is a contradiction.

Now by Theorem \ref{min}, there exist $J \subset S^a$ with $W_J$ finite, $u \in W_{\s(J)}$ and a straight element $x \in {}^J \tW {}^{\s(J)}$ such that $J=x\s(J)x\i$ and $\tw \approx_\s xu$. By Lemma \cite[Lemma 3.1]{H2}, we may assume $\tw=xu$. As in the proof of \cite[Lemma 3.2]{H2}, let $\cp \subset G(\LL)$ be the standard {\it parahoric subgroup} corresponding to $J$ with prounipotent radical $\cu$. Then $\bar \cp=\cp / \cu$ is a reductive group and $\bar I = I / \cu$ is a Borel subgroup of $\bar \cp$. Since $x \in {}^J \tW {}^{\s(J)}$, we have ${}^{x\s} \bar I=\bar I$. By assumption, $\tw=xu$ is fundamental, which means $({}^{xu} I \cap I) \cdot_{\s} (xu) =xu I$. Hence $({}^{xu}\bar I \cap \bar I) \cdot_{\s} (xu) =xu \bar I$. In particular, $\dim {}^{xu}\bar I \cap \bar I \ge \dim \bar I$, which implies $u=1 \in W_{\s(J)}$. Therefore $\tw=x$ is straight as desired.

It remains to show $(c) \Rightarrow (b)$. Let $\tw$ be a straight element. We show that $\tw$ is $P_{\nu_{\tw}}$-fundamental.

By Theorem \cite[Theorem 2.8 \& Proposition 3.2]{HN1}, there exists $\tw'$ such that $\tw \to_\s \tw'$ such that $\D \cap V_{\tw'} \neq \emptyset$. Since $\tw$ is straight, $\tw_1$ is also straight and $\tw \approx_\s \tw'$. We claim that $\tw'$ is $P_{\nu_{\tw'}}$-fundamental. By the same argument in the proof of Theorem \ref{bij}, we see $\tw'$ is a $P_{\nu_{\tw'}}$-alcove element. Choose $e \in \D \cap V_{\tw'} \subset \D_{\nu_{\tw'}}$. Then $\tw\s(e)=e+\nu_{\tw'} \in \D_{\nu_{\tw'}}$, which means $\tw_1 \D_{\nu_{\tw'}}=\D_{\nu_{\tw'}}$ and ${}^{\tw'} I_{\nu_{\tw'}}=I_{\nu_{\tw'}}$. Therefore $\tw'$ is $P_{\nu_{\tw'}}$-fundamental as desired. To complete the inductive, it remains to show the following statement.

\

{\it Let $s \in S^a$ and $\tw, \tw_1 \in \tW$ such that $\tw=s \tw_1 \s(s)$ and $\ell(\tw)=\ell(\tw_1)$. Suppose $\tw_1$ is $P_{\nu_{\tw_1}}$-fundamental, then $\tw$ is $P_{\nu_{\tw}}$-fundamental.}

\

Note that $\nu_{\tw}=\bar s(\nu_{\tw_1})$. Hence by Lemma \ref{f}, $\tw$ is a $P_{\nu_{\tw}}$-alcove element. We show that ${}^{\tw} I_{\nu_{\tw}}=I_{\nu_{\tw}}$. If $\bar s(\nu_{\tw_1}) \neq \nu_{\tw_1}$, it follows from Lemma \ref{change}. Otherwise, $\nu_{\tw}=\nu_{\tw_1}$ and $s \in S^a_{\nu_{\tw_1}}$. Since $\ell(\tw)=\ell(\tw_1)$, we have $\ell_{\nu_{\tw_1}}(\tw)=\ell_{\nu_{\tw_1}}(\tw_1)=0$, where the last equality is due to the assumption that $\tw_1$ is $P_{\nu_{\tw'}}$-fundamental. Therefore $\tw=\tw_1$. The proof is finished.
\end{proof}

To prove Theorem \ref{minele}, we need a result analogous to \cite[Theorem 1.1]{V}.
\begin{lem}\label{K1}
Let $h \in G(\LL)\s$. Then there exist $x \in {}^S \tW$ such that $h$ is conjugate by $K$ to some element of $K_1 x\s K_1$. Moreover, if $h \in I x I$, then $h$ is conjugate by $I$ to some element of $K_1 x K_1$.
\end{lem}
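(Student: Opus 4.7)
The strategy is to prove the \textit{Moreover} claim first and deduce the first assertion from it.

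For the first assertion, let $h=g\s$ with $g\in G(\LL)$. By the Bruhat decomposition $G(\LL)=\bigsqcup_{y\in\tW}IyI$, we have $g\in IyI$ for a unique $y$. Factor $y=\dot w\cdot x$ with $x\in{}^S\tW$ (the minimal representative of $Wy$) and $\dot w\in K$ a lift of $w\in W$, so $g=i_1\dot w x i_2$ for some $i_1,i_2\in I$. The $K$-$\s$-conjugate $(i_1\dot w)^{-1}g\,\s(i_1\dot w)=x\cdot i_2\s(i_1)\s(\dot w)$ lies in $xI\cdot K=xK$; a further $K$-conjugation absorbs the trailing $\s(\dot w)$-factor and drops the result into $IxI$. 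Applying the \textit{Moreover} claim to this element then completes the first assertion.

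For the \textit{Moreover} claim, let $g\in IxI$ with $x\in{}^S\tW$. An initial $I$-$\s$-conjugation by $i_1\in I$ (where $g=i_1 x i_2$) gives $g=xj$ with $j\in I$. Pass to the quotient $I/K_1=B_0(\FF)$, where $B_0(\FF)=T(\FF)U^+(\FF)$. A direct affine-root computation identifies $K_1xK_1\cap xI$, modulo $K_1$, with an explicit subgroup $\bar U\subset B_0(\FF)$ determined by the condition $\langle w\b,\lambda\rangle<0$ on the root-group components, where $x=t_\lambda w$. The conjugation action of $i'\in I\cap xIx^{-1}\subset I$ on $xj$ preserves $xI$ and descends, modulo $K_1$, to a $\s$-twisted two-sided action on $\bar j\in B_0(\FF)$, with left action by the image of $x^{-1}(i')^{-1}x$ and right multiplication by $\s(\overline{i'})$.

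The decisive step is a Lang--Steinberg argument: by the affine-root bookkeeping, the image of $I\cap xIx^{-1}$ in $B_0(\FF)$ is the intersection of $B_0(\FF)$ with a parabolic subgroup of $G(\FF)$ attached to $x$. Applying Lang--Steinberg to the corresponding Levi quotient (to absorb the Levi part of $\bar j$) and then using straightforward unipotent clearance shows that every orbit of the $\s$-twisted action meets $\bar U$, which yields the \textit{Moreover} claim. The principal obstacle is the precise affine-root computation identifying $\bar U$ and the image of $I\cap xIx^{-1}$ in $B_0(\FF)$, and verifying that the induced twisted action decomposes in the form needed for Lang--Steinberg. The hypothesis $x\in{}^S\tW$ is essential here: it ensures that $x^{-1}$ preserves the positivity of the finite simple roots, aligning the parabolic structure that underlies the Lang--Steinberg reduction.
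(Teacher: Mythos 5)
Your reduction of the first assertion to the \emph{Moreover} claim does not go through. After $\s$-conjugating $g=i_1\dot w x i_2$ by $i_1\dot w$ you obtain $x\,i_2\,\s(i_1)\,\s(\dot w)\in xK$, which is \emph{not} in $IxI$, and there is no single further $K$-conjugation that ``absorbs'' $\s(\dot w)$ while keeping the element in $IxI$ (or in $Ix'I$ for any controlled $x'\in{}^S\tW$): any conjugation removing $\s(\dot w)$ on the right reintroduces a copy of $\dot w$ on the left, and in general the element lands in the $I$-double coset of a longer element of $\tW$, whose minimal coset representative is a different element of ${}^S\tW$. This is precisely the point where the paper invokes genuinely nontrivial machinery: Theorem \ref{min'} gives a chain of length-nonincreasing twisted conjugations $\tw\to_{S,\s}\tw'=xu$ with $x\in{}^S\tW$ of minimal length in its $W$-$\s$-conjugacy class and $u\in W_{\s(J)}$, this chain is lifted to the group as in \cite[Lemma 3.1]{H2}, and the finite factor $u$ is then removed by a Lang--Steinberg argument in the reductive quotient $K/K_1\cong G(\FF)$ as in \cite[Lemma 3.2]{H2}, landing in $\cu_J x\s\,\cu_J$ with $\cu_J=K_1N_J$. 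The $x$ produced is this minimal-length element, not the minimal representative of the coset $Wy$ that your one-shot Bruhat factorization supplies, so the first assertion cannot be obtained from the \emph{Moreover} claim in the way you describe.

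The \emph{Moreover} claim itself is also not established. You flag the ``precise affine-root computation'' and the decomposition of the twisted action as the principal obstacle, and the mechanism you propose cannot close it: working modulo $K_1$, a Lang--Steinberg argument in $B_0(\FF)$ or a Levi quotient can at best normalize the element up to factors in $N_J=\prod_{\a\in\Phi^+\setminus\Phi_J}U_\a(\FF)$, but it cannot push those factors into $K_1$, since that is a depth-increasing phenomenon invisible in the finite quotient $I/K_1$. The paper handles it in Step 2 by an iterative conjugation inside $K$: by \cite[Lemma 4.3]{HN2} there is $n_\a$ with ${}^{(x\s)^{-n_\a}}(U_\a(\FF))\subset K_1$ and intermediate conjugates in $N_J$, one conjugates by $z_\a={}^{(x\s)^{-n_\a+1}}u_\a\cdots{}^{(x\s)^{-1}}u_\a$ and inducts along a linear order on the roots of $N_J$. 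Both this step and the preceding one use that $x$ is of minimal length in its $W$-$\s$-conjugacy class with $J$ maximal such that $x\s(J)x\i=J$; for an arbitrary $x\in{}^S\tW$, as in your formulation, these properties are not available, so even granting your affine-root bookkeeping the ``unipotent clearance'' is unjustified. As it stands, both halves of your argument have gaps at their decisive steps.
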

\begin{proof}
The proof consists of two steps.

Step 1. There exists $x \in {}^S\tW$, of minimal length in its $W$-conjugacy class, such that $h$ is conjugate by $K$ to some element $h_1 \in \cu_J x\s \cu_J$. Moreover, if $h \in I x I$, then $h$ is conjugate by $I$ to some element of $\cu_J x\s \cu_J$. Here $J=\max \{J' \subset S; x\s(J')x\i=J'\}$ and $\cu_J=K_1 N_J$, where $N_J=\prod_{\a \in \Phi^+ \setminus \Phi_J} U_\a(\FF)$. This step follows from the proofs of \cite[Lemma 3.1 \& 3.2]{H2} by using Theorem \ref{min'} instead of Theorem \ref{min}.

Step 2. $h_1 \in \cu_J x\s$ is conjugate by $N_J$ into $K_1 x\s K_1$. By \cite[Lemma 4.3]{HN2}, for each $\a \in \Phi^+ \setminus \Phi_J$, there exists $n_\a \in \ZZ_{>0}$ such that ${^{(x\s)^{-n_\a}}} (U_\a(\FF)) \subset K_1$ and ${^{(x\s)^{-i}}} (U_\a(\FF)) \subset N_J$ for $0 \le i < n_\a$. Let $<$ be a linear order on the set roots of $N_J$ such that $\a > \b$ if $\<\a-\b, \rho^\vee\> > 0$, where $\rho^\vee=\frac{1}{2} \sum_{\g \in \Phi^+} \g^\vee$. Set $N_J^{\ge\a}=\prod_{\a \le \b} U_\b(\FF)$ and $N_J^{> \a}=\prod_{\a < \b} U_\b(\FF)$. Assume $h_1 \in K N_J^{\ge \a} x\s \setminus K N_J^{> \a} x\s $. Write $h_1 \in K N_J^{> \a} u_\a x\s $. Then $u_\a \neq 1$. Let $z_\a={^{{(x\s)}^{-n_\a+1}}}u_\a \cdots {^{{(x\s)}^{-2}}}u_\a {^{{(x\s)}^{-1}}}u_\a \in N_J$. Then we have $z_\a h_1 z_\a\i \in K N_J^{> \a} x\s$. Now Step 2 follows by induction on the linear order $>$.
\end{proof}

The following lemma should be well known to experts. We give a proof for completeness.
\begin{lem}\label{J}
Let $x \in {}^S\tW$ and $w \in W$. If $wx\s=x\s w$. Then there exists $J \subset S$ such that $x \s(J) x\i=J$ and $w \in W_J$.
\end{lem}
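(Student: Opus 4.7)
\textbf{Proof proposal for Lemma \ref{J}.} My approach is to decompose $x$ in the semidirect product $\tW \cong X_*(T)\rtimes W$ and identify $J$ with the support of $w$.

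I would first write $x = t_\lambda y$ with $y\in W$ and $\lambda\in X_*(T)$, and use the identity $x^{-1}\alpha = y^{-1}\alpha + \langle\alpha,\lambda\rangle$ (as an affine root in $\tPhi$). The condition $x\in{}^S\tW$, i.e.\ $x^{-1}\alpha_s\in\tPhi^+$ for every $s\in S$, forces a case distinction: either $y^{-1}\alpha\in\Phi^+$ and $\langle\alpha,\lambda\rangle\ge 0$, or $y^{-1}\alpha\in\Phi^-$ and $\langle\alpha,\lambda\rangle\ge 1$. In both cases $\langle\alpha,\lambda\rangle\ge0$, so $\lambda$ is dominant; writing $J_\lambda=\{s\in S:s\lambda=\lambda\}$ (a standard parabolic subset since $\lambda$ is dominant), the second alternative is excluded for $\alpha\in\Phi_{J_\lambda}$, which is equivalent to $y\in{}^{J_\lambda}W$.

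Next, expanding $wx\sigma=x\sigma w$ in $\tW\rtimes\langle\sigma\rangle$ and comparing translation and linear parts yields
\[
w\lambda=\lambda\qquad\text{and}\qquad wy=y\sigma(w),
\]
so $w\in W_{J_\lambda}$. Since $y\in{}^{J_\lambda}W$ and $w\in W_{J_\lambda}$, the product $wy$ is length-additive, $\ell(wy)=\ell(w)+\ell(y)$; combined with $wy=y\sigma(w)$ this forces $\ell(y\sigma(w))=\ell(y)+\ell(\sigma(w))$ as well, so that $w\cdot y$ and $y\cdot\sigma(w)$ are two reduced expressions of the same element.

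I would take the candidate $J:=\mathrm{supp}(w)\subset J_\lambda$. Then $w\in W_J$ by definition, and it remains to verify $x\sigma(J)x^{-1}=J$. For $s\in J$ a direct computation yields
\[
x\sigma(s)x^{-1}=s_\beta\cdot t_{-\langle\beta,\lambda\rangle\beta^\vee},\qquad \beta:=y\sigma(\alpha_s)\in\Phi.
\]
If $\beta=\pm\alpha_{s'}$ for some $s'\in J$, then $s'\in J_\lambda$ so $\langle\beta,\lambda\rangle=0$, killing the translation factor and giving $x\sigma(s)x^{-1}=s_{s'}\in J$. Hence everything reduces to the following Coxeter-theoretic claim:
\begin{center}
$(\star)$ \emph{If $w\cdot y = y\cdot w'$ is an equality of two reduced words in a Coxeter group (so $\ell(wy)=\ell(w)+\ell(y)=\ell(y)+\ell(w')$), then $y$-conjugation induces a bijection $y\,\mathrm{supp}(w')\,y^{-1}=\mathrm{supp}(w)$ of subsets of $S$.}
\end{center}
Granting $(\star)$ with $w'=\sigma(w)$ delivers $y\sigma(J)y^{-1}=J$ inside $S$, and hence $x\sigma(J)x^{-1}=J$, completing the argument.

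The main obstacle is proving $(\star)$. I would proceed by induction on $\ell(y)$: the base $\ell(y)=0$ is trivial, and for the inductive step one writes $y=ty''$ with $t\in S$ a left descent of $y$. The hypothesis $\ell(wy)=\ell(w)+\ell(y)$ gives $\ell(wt)=\ell(w)+1$; feeding the equation $wt\cdot y''=t\cdot(tw')\cdot$ (suitably rewritten) into the strong exchange condition applied to the two reduced expressions of $wy$ splits into two cases, according to whether $t$ commutes with every element of $\mathrm{supp}(w)$ (giving the identity bijection on supports), or $t$ satisfies a braid relation with exactly one $s\in\mathrm{supp}(w)$ (giving the transposition $s\leftrightarrow tst$ on supports). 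Iterating the length-one case along a reduced expression of $y$ produces the desired bijection and concludes $(\star)$. The delicate point is tracking the supports through the braid moves, but this is a short combinatorial exercise with the Matsumoto--Tits theorem.
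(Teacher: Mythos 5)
Your reduction to the finite Weyl group is fine as far as it goes: the decomposition $x=t_\lambda y$, the conclusions that $\lambda$ is dominant, $y\in{}^{J_\lambda}W$, $w\in W_{J_\lambda}$, $wy=y\s(w)$ with both products reduced, and the formula $x\s(s)x^{-1}=s_\beta t_{-\langle\beta,\lambda\rangle\beta^\vee}$ with $\beta=y\s(\alpha_s)$ are all correct. The gap is the claim $(\star)$ on which everything rests: it is false as stated. Take $W$ of type $A_2$, $w=s_1s_2$, $y=s_1$, $w'=y^{-1}wy=s_2s_1$. Then $wy=yw'=s_1s_2s_1$ and $\ell(wy)=3=\ell(w)+\ell(y)=\ell(y)+\ell(w')$, so both factorizations are reduced; yet $ys_2y^{-1}=s_1s_2s_1\notin S$, so conjugation by $y$ does not carry $\supp(w')$ into $S$, let alone onto $\supp(w)$. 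Consequently the proposed induction on $\ell(y)$ cannot succeed, and the ``commutes with all of $\supp(w)$ or braids with exactly one $s$'' dichotomy in your sketch is not a valid case analysis even for true instances.

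What saves the argument in your situation is precisely the hypothesis you discarded when isolating $(\star)$: you know $y\in{}^{J_\lambda}W$ and $J:=\supp(w)\subseteq J_\lambda$, hence $y$ is minimal in $W_{J}y$ (the counterexample has $y=s_1\in W_J$, violating this). With that hypothesis restored the statement does hold, but the natural proof is to take a reduced word $\s(w)=t_1\cdots t_r$, write $yt_1\cdots t_i=u_iy_i$ with $u_i\in W_{J}$ and $y_i\in{}^{J}W$, and use the length additivity of $y\s(w)$ together with Deodhar's lemma to see that at every step $y_i=y_{i-1}$ and $y_{i-1}t_iy_{i-1}^{-1}\in J$, whence $w=(yt_1y^{-1})\cdots(yt_ry^{-1})$ is reduced and $y\,\supp(\s(w))\,y^{-1}=\supp(w)$. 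This is exactly the paper's proof, carried out there directly in $\tW$ with the decomposition $W\cdot{}^S\tW$ (one writes $x\s s_1\cdots s_i=w_ix_i\s$ with $x_i\in{}^S\tW$ and concludes $x\s(s_i)x^{-1}\in\supp(w)$), with no need to split off the translation part. So your route can be repaired, but only by reinstating the minimality of $y$ and then essentially reproducing the paper's tracking argument; as written, the key combinatorial step is both unproved and false.
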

\begin{proof}
Let $w=s_1 s_2 \cdots s_r$ be a reduce expression, where $s_i \in S$ for $i=1, 2, \dots, r$. Let $\supp(w) \doteq \{s_1, \dots, s_r\}$ be the {\it support} of $w$, which is independent of the choice of reduced expression of $w$. We show that $J=\supp(w)$ meets the requirement of the lemma. Write $x\s s_1 \cdots s_i=w_i x_i\s$ with $w_i \in W$ and $x_i \in {}^S \tW$ for $i=1, \dots, r$. Since $\ell(x\s w)=\ell(wx\s)=\ell(x)+\ell(w)$, $x_{i-1}\s s_i > x_{i-1}\s$. Hence we have either $x_i=x_{i-1}\s(s_i) \in {}^S\tW$ or $x_i=x_{i-1}$ and $s_i' \doteq x_{i-1} \s(s_i) x_{i-1}\i \in S$. In any case we have $x_i \ge x_{i-1}$. Since $x\s w=w_r x_r\s=w x\s$, we have $w_r=w$ and $x_i=x$ for $i=1, \dots, r$. Hence $s_i'=x \s(s_i) x\i \in \supp(w_r)=\supp(w)=J$, which implies $J=x J x\i$ as desired.
\end{proof}

\begin{proof}[Proof of Theorem \ref{minele}]
Let $g \in G(\LL)$ be a minimal element. By Lemma \ref{K1}, $g \in K \cdot_\s K_1 x K_1$ for some $x \in {}^S \tW$. We show that $x$ is fundamental. Since $g$ is minimal, $x$ is also minimal. Therefore $$I x I \subset I \cdot_\s K_1 x K_1 \subset K \cdot_\s x,$$ where the first inclusion follows from the ``Moreover" part of Lemma \ref{K1}. In other words, $x$ is $K$-fundamental and the first statement of the theorem follows form Lemma \ref{KI} below. The ``In particular" part now follows from Theorem \ref{fund} and \cite[Lemma 6.11]{V}.
\end{proof}

\begin{lem}\label{KI}
Let $x \in {}^S \tW$. Then $x$ is $K$-fundamental if and only if $x$ is fundamental.
\end{lem}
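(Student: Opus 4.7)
\smallskip

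The direction $(\Leftarrow)$ is immediate since $I \subset K$: any single $I$-$\s$-conjugacy class lies in a single $K$-$\s$-conjugacy class, so fundamental implies $K$-fundamental.

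For $(\Rightarrow)$, let $x \in {}^S\tW$ be $K$-fundamental; the plan is to show that every $h \in IxI$ lies in $I \cdot_\s x$. By the ``Moreover'' part of Lemma~\ref{K1}, every $h \in IxI$ is $I$-$\s$-conjugate to an element of $K_1 x K_1$, so I may assume $h \in K_1 x K_1$. By $K$-fundamentality, $h = k^{-1} x \s(k)$ for some $k \in K$, and the Iwahori--Bruhat decomposition $K = \bigsqcup_{w \in W_0} IwI$ lets me write $k = i_0 w i_1$ with $i_0, i_1 \in I$ and $w \in W_0$. After absorbing $i_1$ via $I$-$\s$-conjugation, the question becomes whether the element $w^{-1}(i_0^{-1} x \s(i_0))\s(w)$ lies in $I \cdot_\s x$.

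The crucial constraint on $w$ comes from $x \in {}^S\tW$: the length additivity $\ell(w^{-1}x) = \ell(w) + \ell(x)$ forces the Hecke product identity $Iw^{-1}I \cdot IxI = Iw^{-1}xI$, a single Iwahori double coset. Tracing the condition $h \in K_1 x K_1 \subset IxI$ through the product $Iw^{-1}xI \cdot I\s(w)I$ then forces, at the $\tW$-level, the equation $w^{-1} x \s(w) = x$, equivalently $wx = x\s(w)$. Invoking Lemma~\ref{J}, I conclude $w \in W_J$ for some $J \subset S$ with $x\s(J)x^{-1} = J$. Under this commutation a direct computation gives $w^{-1}(i_0^{-1} x \s(i_0))\s(w) = j^{-1} x \s(j)$ with $j := w^{-1} i_0 w$, so it remains to absorb $j \in w^{-1}Iw$ into $I$.

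The final step is a parahoric absorption inside the $x\s$-stable parahoric $\cp_J \supset I$. Decomposing $w^{-1}Iw = (I \cap w^{-1}Iw) \cdot U$, where $U$ is the finite-dimensional product of the affine root subgroups $U_a$ with $a \in w^{-1}(\tPhi^+) \setminus \tPhi^+$ (all of which lie inside $\cp_J$), the $I \cap w^{-1}Iw$ piece already lies in $I$ and gives honest $I$-$\s$-conjugation. The $U$-contribution is handled by exploiting the $x\s$-stability of $J$: the $x\s$-action on $U$ descends to a conjugation in the reductive quotient $\cp_J / \cu_J$, and combining with $wx = x\s(w)$ allows one to solve for an element of $I$ that effects the same $\s$-conjugate of $x$. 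The main obstacle is precisely this last parahoric/reductive-quotient argument, which is the $K$-fundamental analogue of the reductive dimension argument at the end of the proof of Theorem~\ref{fund} $(a) \Rightarrow (c)$, where a $W_{\s(J)}$-factor is forced to be trivial.
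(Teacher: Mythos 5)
Up to the decisive step, your skeleton is the same as the paper's: write the conjugating element of $K$ as $i_0wi_1$ with $w\in W$, use $x\in{}^S\tW$ and length additivity to force $wx\s=x\s w$ in $\tW$, and invoke Lemma \ref{J} to obtain $J\subset S$ with $x\s(J)x^{-1}=J$ and $w\in W_J$. (Your preliminary reduction to $K_1xK_1$ via the ``Moreover'' part of Lemma \ref{K1} is superfluous, since afterwards you only use $h\in IxI$; note also that that ``Moreover'' is proved for the particular $x$ of minimal length in its $W$-conjugacy class produced in Lemma \ref{K1}, so quoting it for an arbitrary $x\in{}^S\tW$ would itself require an argument.)

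The genuine gap is the final step, which you yourself flag as ``the main obstacle''. After passing to $j=w^{-1}i_0w$, you propose to handle the factor $U=\prod_{a\in w^{-1}(\tPhi^+)\setminus\tPhi^+}U_a$ (i.e.\ the groups $U_{-\a}(\FF)$ with $\a\in\Phi_J^+$ inverted by $w$) by ``solving for an element of $I$ that effects the same $\s$-conjugate of $x$''. Such absorption is impossible in general: take $G=GL_2$ and $x=\e^{(1,1)}$ central, so $x\in{}^S\tW$ is fundamental and $J=S$; for $v\in U_{-\a}(\FF)$ with entry $c$ satisfying $c^q\neq c$ one gets $v^{-1}x\s(v)=x\,v^{-1}\s(v)\notin IxI$, so no element of $I$ can produce this $\s$-conjugate. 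The point is not that the unipotent contribution can be matched inside $I$, but that it must be shown to be trivial, and for this one has to use the constraint that the conjugate stays in $Ix\s I$ --- a constraint your reductive-quotient sketch never invokes. The paper does exactly this: it keeps the finite unipotent coordinate on the positive side by writing $g=hwi$ with $h\in\prod_{\a\in\Phi_J^+,\,w^{-1}(\a)<0}U_\a(\FF)\subset I$ and $i\in I$, observes that $g^{-1}x\s g=i^{-1}w^{-1}x\s\,{}^{(x\s)^{-1}}(h^{-1})\,h\,wi$ with ${}^{(x\s)^{-1}}(h^{-1})\,h$ again in that finite unipotent group, and then uses $\ell(x\s)=\ell(w^{-1}x\s)-\ell(w)$ together with the uniqueness of the decomposition of elements of $I\tw I$ recalled in \S\ref{Iwahori} to force ${}^{(x\s)^{-1}}(h^{-1})\,h=1$, whence $g^{-1}x\s(g)=i^{-1}x\s(i)\in I\cdot_\s x$. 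Without an argument of this kind your proof is incomplete precisely at its crux.
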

\begin{proof}
The ``if'' part is trivial. We show the ``only if'' part. Let $g \in I u I \subset K$ with $u \in S$ such that $g\i x\s g \in I x\s I$. Since $x\s \in {}^S \tW$, we have $\ell(u\i x\s u)=\ell(u\i x\s)-\ell(u)=\ell(x)$ and $u\i x \s(u)=x \in \tW$. Applying Lemma \ref{J}, we have $x\s(J)x\i=J$, where $J=\supp(u)$. Write $g=h u i$, where $h \in \prod_{\a \in \Phi_J^+, u\i(\a)<0}U_{\a}(\FF)$ and $i \in I$. Then $$g\i x\s g=i\i u\i x\s\ {^{(x\s)\i}}h\i h\ u i \in I x\s I.$$ By the relations between $x\s$, $u$ and $J$, we have $${^{(x\s)\i}}h\i h \in \prod_{\a \in \Phi_J^+, u\i(\a)<0}U_{\a}(\FF).$$ Moreover, by $\ell(x\s)=\ell(u\i x\s)-\ell(u)$, we derive that ${^{(x\s)\i}}h\i h=1$, which implies that $g\i x \s(g) \in I \cdot_\s x$. Therefore $x \in {}^S \tW$ is fundamental if it is $K$-fundamental.
\end{proof}

\begin{proof}[Proof of Proposition \ref{minu}]
Let $C$ be a $\s$-conjugacy class of $G(\LL)$ that intersects with $K \mu(\e) K$. Assume $C \cap I \tw I \neq \emptyset$ for some $\tw \in W \mu(\e) W$. We claim that there exist a straight element $x \in C$ such that $\tw \geq x$ in the sense of Bruhat order. Then the theorem follows in the same way of \cite[Proposition 8.9]{VW}. We prove the claim by induction on the length of $\tw$.

Assume $\tw$ is of minimal length in its $\s$-conjugacy class in $\tW$. Then $\tw \in C$ by \cite[Theorem 3.5]{H2}. Applying Theorem \ref{min}, we have $\tw \approx_\s \tw' \geq x$, where $x \in C \cap \tW$ is a straight element. Therefore, by Lemma \ref{Bruhat}, there exists $y \in \tW$ in the same $\s$-conjugacy class of $x$ which satisfies $\tw \geq y$ and $\ell(y) \le \ell(x)$, which means $y \in C$ is straight since so is $x$ (see \S \ref{straight}).

If $\tw$ is not of minimal length in its $\s$-conjugacy class in $\tW$. By Theorem \ref{min}, there exist $\tw_1 \in \tW$ and $s \in S^a$ such that $\tw \approx_\s \tw_1$ and $\tw_1 > s \tw_1 \s(s)$. Then $C$ intersects with either $I s \tw_1 I$ or $I s \tw_1 \s(s)I$. Since $\tw_1 > s \tw_1, s \tw_1 \s(s)$, the claim holds for $\tw_1$ by induction hypothesis, and hence holds for $\tw$ by the same argument in the above paragraph.
\end{proof}

\section{$K$-fundamental ($G(\LL)$-fundamental) elements}
\begin{lem} \label{red}
Let $\tw \in \tW$ be a $K$-fundamental ($G(\LL)$-fundamental) and $s \in S$ ($s \in S^a$).

(a) If $\ell(\tw)=\ell(s \tw \s(s))$, then $s \tw \s(s)$ is $K$-fundamental ($G(\LL)$-fundamental).

(b) If $\tw > s \tw \s(s)$, then both $\tw \s(s)$ and $s \tw \s(s)$ are $K$-fundamental ($G(\LL)$-fundamental). Moreover, $\bar s(\nu_{\tw})=\nu_{\tw}$ and $\bar s(\nu_{\tw \s(s)})=\nu_{\tw \s(s)}$.
\end{lem}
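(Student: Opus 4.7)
The plan is to mimic the proofs of Lemma \ref{f'} and Theorem \ref{bij}, exploiting the Iwahori decomposition $I = I^s U_s$ together with Lemma \ref{decrease} as the principal technical tools.

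For part (a), after reducing by symmetry to the case $s\tw > \tw$ and $\tw\s(s) < \tw$, the key is to build an explicit $\s$-conjugator that moves an arbitrary $g = \alpha \cdot s\tw\s(s) \cdot \beta \in I s\tw\s(s) I$ into $I\tw I$. I would take $h = us$, where $u \in U_s$ is the unique element clearing the $U_s$-component of $\alpha$ on the left; for the $K$-fundamental case we need $s \in S$ so that $h \in K$, while for the $G(\LL)$-fundamental case $s \in S^a$ suffices. The computation $h\i g \s(h) = (s\alpha' s)\,\tw\,(\s(s)\beta'\s(s))$, together with the observation that $s\tw > \tw$ allows $\tw$ to absorb any stray $U_{-a_{\s(s)}}$-factor into a positive affine root subgroup, places this element in $I\tw I$. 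Since $\tw$ is fundamental and $s\i \tw \s(s) = s\tw\s(s)$ realises $s\tw\s(s)$ as $\s$-conjugate to $\tw$ by $s$, we conclude that the single $K$- (resp.\ $G(\LL)$-) $\s$-class of $\tw$ contains all of $I s\tw\s(s) I$.

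For part (b), the hypothesis $\tw > s\tw\s(s)$ forces $s\tw < \tw$ and $\tw\s(s) < \tw$, putting us precisely in the scope of Lemma \ref{decrease}. I would first establish the ``moreover'' claim $\bar s(\nu_{\tw}) = \nu_{\tw}$ by contradiction: assuming $\bar s(\nu_{\tw}) \neq \nu_{\tw}$, I would pick $g \in \tw I \setminus \tw\s(I^s)$ with nontrivial $U_{\s(s)}$-component, so Lemma \ref{decrease} shows $g$ is not $I$-$\s$-conjugate to any element of $I^s \tw \s(I^s)$. Converting this into an obstruction against $K$-fundamentality then requires analysing the $K$-$\s$-conjugation through the Bruhat decomposition $K = \bigsqcup_{w \in W} IwI$ and arguing that the finite quotient $W \cong K/I$ cannot bridge the residual gap unless $\bar s$ fixes $\nu_{\tw}$. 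The analogous argument with $\tw$ replaced by $\tw\s(s)$ yields $\bar s(\nu_{\tw\s(s)}) = \nu_{\tw\s(s)}$. With this Newton-point rigidity in place, I would then deduce $K$- (resp.\ $G(\LL)$-) fundamentality of $\tw\s(s)$ and $s\tw\s(s)$ by a cyclic-shift computation parallel to part (a), now using that $s$ preserves the Levi $M_{\nu_{\tw}}$.

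The main obstacle will be the ``moreover'' part of (b): Lemma \ref{decrease} furnishes an obstruction at the level of $I$-$\s$-conjugacy, whereas fundamentality constrains $K$- or $G(\LL)$-$\s$-conjugacy, and carefully bridging these two levels using the geometric conditions $\tw\i(a_s) < 0$ and $\tw(a_{\s(s)}) < 0$ is the delicate point. The part (a) argument, while computationally intricate because one must track the $U_{\pm a_s}$ and $U_{\pm a_{\s(s)}}$ factors through two successive $\s$-conjugations, should go through by direct mimicry of Lemma \ref{f'}(a).
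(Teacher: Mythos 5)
Your treatment of the fundamentality assertions is essentially the paper's: the paper proves (b) by $\s$-conjugating any element of $I\tw\s s$ or $Is\tw\s s$ by a suitable element of $U_s$ (to control the $U_s$-component) and then by $s$, landing in $I\tw\s I$, and then invokes $K$- (resp.\ $G(\LL)$-) fundamentality of $\tw$; part (a) is ``proved similarly''. Your cyclic-shift computation in (a) and the analogous step in (b) are this argument, and note it needs no Newton-point input at all, so making it depend on the ``moreover'' part (and on $s$ preserving $M_{\nu_{\tw}}$) inverts the natural logical order.

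The genuine gap is the ``moreover'' part of (b), and your proposed route cannot be repaired as stated. Lemma \ref{decrease} is a purely $I$-$\s$-conjugacy obstruction and has no contact with Newton points: the hypothesis $\bar s(\nu_{\tw})\neq\nu_{\tw}$ never enters your argument, in contrast with the proof of Theorem \ref{bij}, where the analogous hypothesis $\bar s(\l)\neq\l$ is used precisely to force $I_L\subset I^s$ so that Lemma \ref{decrease} applies to $I_L\tw\s(I_L)$; here there is no Levi playing that role, and the obstruction you extract (certain elements of $\tw I$ are not $I$-$\s$-conjugate into $I^s\tw\s(I^s)$) holds whether or not $\bar s$ fixes $\nu_{\tw}$, so it cannot contradict $K$-fundamentality. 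The step you flag as ``delicate'' --- upgrading from $I$- to $K$- or $G(\LL)$-conjugacy via the Bruhat decomposition of $K$ --- is exactly the missing argument, and no mechanism is offered. The paper proves the ``moreover'' part by a different, metric argument: having already shown (from the first part) that $\tw\s(s)$ and $\tw$ lie in the same $\s$-conjugacy class of $G(\LL)$, so that $||\nu_{\tw\s(s)}||=||\nu_{\tw}||$, it assumes $\bar s(\nu_{\tw})\neq\nu_{\tw}$, picks $p\in H_{a}\cap V_{\tw}$ (nonempty because $V_{\tw}=V_{\tw}+\nu_{\tw}$ and $\<\a,\nu_{\tw}\>\neq 0$), and applies \cite[Lemma 2.3]{HN1} to get $||\nu_{\tw\s(s)}||\le ||\tw\s s(p)-p||=||\tw\s(p)-p||=||\nu_{\tw}||$ with equality forcing $p\in V_{\tw\s(s)}$; this yields $\nu_{\tw\s(s)}=\nu_{\tw}$ fixed by $\bar\tw\s\bar s$, hence $\bar s(\nu_{\tw})=\nu_{\tw}$, a contradiction, and exchanging the roles of $\tw$ and $\tw\s(s)$ gives the second identity. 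Without an argument of this kind (or some substitute linking the length drop and the conjugacy class to the Newton point), your proof of the ``moreover'' claim is missing.
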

\begin{proof}
We only prove (b). (a) can be proved similarly. Let $a \in \tPhi^+$ be the simple real affine root corresponding to $s$, i.e., $s_a=s$.

To show $\tw \s(s)$ and $s \tw \s(s)$ are $K$-fundamental (resp. $G(\LL)$-fundamental), it suffices to show that any $h \in I \tw\s s$ and any $h' \in I s\tw\s s$ can be conjugate by $K$ (resp. $G(\LL)$) to some elements of $I \tw\s I$. Note that $\tw\s s(a), s \tw\s s(a)$ are positive real root different from $a$. We can $\s$-conjugate $h$ and $h'$ by suitable elements of $U_s$ respectively so that $h \in I \tw\s s \setminus I^s \tw\s s$ and $h' \in I^s s\tw\s s$. Then we have $s\i h s, s\i h' s \in I \tw\s I$ as desired.

Now we come to the ``Moreover" part. Let $H=H_a$ be the affine root hyperplane of $a$. If $\bar s(\nu_{\tw}) \neq \nu_{\tw}$, then $H \cap V_{\tw} \neq \emptyset$ because $V_{\tw}=\tw\s V_{\tw}=V_{\tw}+\nu_{\tw}$. Choose $p \in H \cap V_{\tw}$. By \cite[Lemma 2.3]{HN1}, we have $$||\nu_{\tw \s(s)}|| \le ||\tw\s s(p)-p||=||\tw\s(p)-p||=||\nu_{\tw}||$$ and the equality holds if and only if $p \in V_{\tw \s(s)}$. On the other hand, we already show that $\tw\s(s)$ and $\tw$ are in the same $\s$-conjugate class in $G(\LL)$. Hence $\nu_{\tw \s(s)}$ and $\nu_{\tw}$ are conjugate by $W$ and hence $||\nu_{\tw \s(s)}||=||\nu_{\tw}||$. Therefore $p \in V_{\tw \s(s)}$ and $$\bar \tw \s \bar s (\nu_{\tw \s(s)})=\nu_{\tw \s(s)}=\tw\s s(p)-p=\tw\s(p)-p=\nu_{\tw}=\bar \tw\s(\nu_{\tw}),$$ which is a contradiction to the assumption $\bar s(\nu_{\tw}) \neq \nu_{\tw}$. Exchanging the roles of $\tw$ and $\tw \s(s)$ in the above argument yields $\bar s(\nu_{\tw s})=\nu_{\tw s}$.
\end{proof}

\begin{thm}\label{K-f}
Let $\tw \in \tW$. Then the following statements are equivalent:

(a) $\tw$ is $K$-fundamental ($G(\LL)$-fundamental).

(b) Write $\tw\s \in x\s W^a_{\nu_{\tw}}$ with $\ell_{\nu_{\tw}}(x\s)=0$. Then $x$ is fundamental and there exists $J \subset S_{\nu_{\tw}}$ ($J \subset S^a_{\nu_{\tw}}$ with $W_J$ finite) such that $\tw \in x\s W_J$, $x \s(J) x\i=J$ and $\ell(\tw\s)=\ell(x\s)+\ell_{\nu_{\tw}}(\tw\s)$. In particular, $\nu_{\tw}=\nu_x$.

(c) $\tw$ is both a $P_{\nu_{\tw}}$-alcove element and a $M_{\nu_{\tw}}(\FF[[\e]])$-fundamental ($M_{\nu_{\tw}}(\LL)$-fundamental) element.
%(d) $\tw$ is $K_{M_{\nu_{\tw}}}$-fundamental ($M_{\nu_{\tw}}(L)$-fundamental) and moreover $\tw \in x W_{\nu_{\tw}}^a$ with $x \in \Om_{\nu_{\tw}}$, then $x$ is straight and $\ell(\tw)=\ell(x)+\ell_{\nu_{\tw}}(\tw)$.
\end{thm}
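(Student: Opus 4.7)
The plan is a cyclic proof $(a) \Rightarrow (b) \Rightarrow (c) \Rightarrow (a)$, built on the reduction method of Lemma \ref{red}, the minimal-length descriptions of Theorems \ref{min} and \ref{min'}, and the characterization of fundamental elements in Theorem \ref{fund}. The implication $(c) \Rightarrow (a)$ is the shortest: by Theorem \ref{N-H}(a) the map $\psi_{M_{\nu_{\tw}},\tw}^G$ is a bijection, so every $I$-$\s$-conjugacy class in $I\tw I$ already meets $I_{\nu_{\tw}} \tw \s(I_{\nu_{\tw}})$. The Levi-fundamental hypothesis then collapses this latter set into a single $M_{\nu_{\tw}}(\FF[[\e]])$-$\s$-conjugacy (resp.\ $M_{\nu_{\tw}}(\LL)$-$\s$-conjugacy) class, which sits inside $K$ (resp.\ $G(\LL)$), forcing $I \tw I$ into one $K$-$\s$-conjugacy (resp.\ $G(\LL)$-$\s$-conjugacy) class.

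For $(a) \Rightarrow (b)$ I would induct on $\ell(\tw)$. If $\tw$ is not of minimal length in its $W$-conjugacy (resp.\ $W^a$-conjugacy) class, Theorem \ref{min'} (resp.\ Theorem \ref{min}) furnishes $\tw_1$ with $\tw \approx_{S,\s} \tw_1$ (resp.\ $\tw \approx_\s \tw_1$) and $s \in S$ (resp.\ $s \in S^a$) such that $s\tw_1\s(s) < \tw_1$. The crucial ``moreover'' part of Lemma \ref{red}(b) gives $\bar s(\nu_{\tw}) = \nu_{\tw}$, so $s \in W_{\nu_{\tw}}$ (resp.\ $W^a_{\nu_{\tw}}$), and both $\tw_1\s(s)$ and $s\tw_1\s(s)$ remain $K$-fundamental (resp.\ $G(\LL)$-fundamental); the induction hypothesis applied to these shorter elements propagates back to $\tw$ since the reduction is entirely internal to $\tW_{\nu_{\tw}}$. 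If $\tw$ is already of minimal length, Theorem \ref{min'} (resp.\ Theorem \ref{min}) presents $\tw \approx xu$ with $x \in {}^S\tW{}^{\s(J_0)}$ (resp.\ $x \in {}^{S^a}\tW{}^{\s(J_0)}$) straight, $u \in W_{\s(J_0)}$ and $x\s(J_0)x\i = J_0$; Theorem \ref{fund} upgrades $x$ to a fundamental element, and since $x$ and $\tw$ are $\s$-conjugate in $G(\LL)$ we automatically get $\nu_x = \nu_{\tw}$. The delicate step is to verify $J_0 \subset S_{\nu_{\tw}}$ (resp.\ $J_0 \subset S^a_{\nu_{\tw}}$ with $W_{J_0}$ finite) together with the length additivity $\ell(\tw\s) = \ell(x\s) + \ell_{\nu_{\tw}}(\tw\s)$; this I plan to extract from the $K$-fundamental (resp.\ $G(\LL)$-fundamental) hypothesis by the parahoric-quotient argument in the spirit of the proof of Theorem \ref{fund}, combined with the Iwahori factorization $I = I^s U_s$ of \S \ref{Iwahori} and the rigidity Lemma \ref{decrease}.

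For $(b) \Rightarrow (c)$, the decomposition $\tw\s = x\s u$ with $x$ fundamental and $\nu_x = \nu_{\tw}$ makes $x$ itself a $P_{\nu_{\tw}}$-fundamental element by Theorem \ref{fund} (as can be read off its proof, where the $P$ witnessing fundamentality of a straight $x$ is precisely $P_{\nu_x}$). The $P_{\nu_{\tw}}$-alcove property transfers from $x$ to $\tw$ because $u \in W^a_{\nu_{\tw}}$ preserves $\Phi_{\nu_{\tw},+}$ and the length additivity prevents $u$ from crossing any affine root hyperplane whose gradient lies in $\Phi_{\nu_{\tw},+}$; a direct check via Lemma \ref{length} applied inductively along a reduced expression of $u$ seals this. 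For Levi-fundamentality I observe that inside $M_{\nu_{\tw}}$ the Newton point of $\tw$ is central, so the parahoric/reductive-quotient argument from Theorem \ref{fund}'s $(a) \Rightarrow (c)$ applied to $\tw$ viewed inside $M_{\nu_{\tw}}$ does the job: $x\s$ has zero Levi length and normalizes $I_{\nu_{\tw}}$ modulo the prounipotent radical of the parahoric it singles out, while $u$ lives in the finite (resp.\ $J$-parabolic with $W_J$ finite) Weyl group of the reductive quotient and is preserved by $x\s$-twisted conjugation, yielding a single $M_{\nu_{\tw}}(\FF[[\e]])$-$\s$-conjugacy (resp.\ $M_{\nu_{\tw}}(\LL)$-$\s$-conjugacy) class.

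The hardest step is the length-additivity identity $\ell(\tw\s) = \ell(x\s) + \ell_{\nu_{\tw}}(\tw\s)$ appearing in $(a) \Rightarrow (b)$: it is the bridge certifying that the interaction between $x$ and $u$ is ``clean'', and without it one could neither conclude that $x$ is fundamental (as opposed to merely straight) nor recover the $P_{\nu_{\tw}}$-alcove structure needed for $(b) \Rightarrow (c)$. Its proof requires a careful accounting of which positive real affine roots get sent negative by $\tw\s$, sorted according to whether they lie in $\tPhi_{\nu_{\tw}}^+$ or not, and ruling out any ``mixing'' by playing Lemma \ref{decrease} against the $K$- or $G(\LL)$-fundamental hypothesis.
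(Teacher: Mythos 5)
Your overall architecture (the cycle $(a)\Rightarrow(b)\Rightarrow(c)\Rightarrow(a)$, with $(c)\Rightarrow(a)$ via the bijectivity of $\psi_{M_{\nu_{\tw}},\tw}^G$ and $(b)\Rightarrow(c)$ via Theorem \ref{fund} plus Lemma \ref{length} and a parahoric argument) matches the paper, and those two implications are essentially sound. The genuine gap is in $(a)\Rightarrow(b)$, at the base case for the $K$-fundamental version. Theorem \ref{min'} (the $W$-conjugacy statement used in the $K$-case) does \emph{not} produce a straight $x$; only Theorem \ref{min} does, in the $G(\LL)$-case. So your step ``Theorem \ref{fund} upgrades $x$ to a fundamental element'' has nothing to stand on: an arbitrary $x\in{}^S\tW{}^{\s(J_0)}$ need not be straight or fundamental (take $\tw=x$ itself non-fundamental). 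The fundamentality of $x$ is exactly the point where the $K$-fundamental hypothesis on $\tw$ must be injected, and it needs two inputs you never supply: first, a parahoric transfer in the spirit of the proof of Theorem \ref{fund} showing $K\cdot_\s I\tw' I=K\cdot_\s I x' I$, so that $x'$ inherits $K$-fundamentality; second, the separate statement (Lemma \ref{KI} in the paper) that a $K$-fundamental element of ${}^S\tW$ is fundamental. One also needs $\ell_{\nu_{x'}}(x'\s)=0$, which comes from $x'$ being $P_{\nu_{x'}}$-fundamental via the proof of Theorem \ref{fund}; without that, $x'$ is not even the element $x$ named in (b). Relatedly, your diagnosis of the ``hardest step'' is off: at the minimal-length representative $\tw'=x'u$ the additivity $\ell(\tw'\s)=\ell(x'\s)+\ell_{\nu_{\tw'}}(\tw'\s)$ is nearly automatic from $x'\in{}^S\tW{}^{\s(J')}$, $u\in W_{\s(J')}$ and $\ell_{\nu}(x'\s)=0$, so the root-counting program you sketch is aimed at the wrong target.

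The second gap is the ``propagates back to $\tw$ since the reduction is entirely internal to $\tW_{\nu_{\tw}}$'' claim. That is false for the length-preserving steps of $\approx_{S,\s}$: Lemma \ref{red}(b) gives $\bar s(\nu_{\tw})=\nu_{\tw}$ only when the length drops, while for $\ell(\tw)=\ell(s\tw\s(s))$ the reflection $s$ may move $\nu_{\tw_1}$, and then the witness data must be conjugated, $x\mapsto s x\s(s)$ and $J\mapsto sJs$ via Lemma \ref{change}, with the fundamentality of the new $x$ and the equality $\ell(\tw\s)=\ell(x\s)+\ell_{\nu_{\tw}}(\tw\s)$ re-derived from the a priori inequality $\ell(\tw\s)\ge\ell(x\s)+\ell_{\nu_{\tw}}(\tw\s)$. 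Likewise, in the length-dropping case one must argue that $s\in J$, that $x$ is unchanged, and that $\nu_{\tw}=\nu_{\tw\s(s)}$, before the induction hypothesis for the shorter elements $\tw\s(s)$ and $s\tw\s(s)$ yields (b) for $\tw$. This Case 1/Case 2 bookkeeping is the bulk of the paper's proof of $(a)\Rightarrow(b)$ and cannot be absorbed into a one-line appeal to the induction hypothesis; as written, your induction does not close.
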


\begin{rmk} It is worth mentioning that in Theorem \ref{K-f} (b), $x$ is uniquely determined by $\tw$ by requiring that $\tw\s \in x\s W^a_{\nu_{\tw}}$ with $\ell_{\nu_{\tw}}(x\s)=0$. Moreover, we always have $\ell(\tw\s) \ge \ell(x\s)+\ell_{\nu_{\tw}}(\tw\s)$. \end{rmk}

\begin{proof}
We only give a proof for the $K$-fundamental case. The $G(\LL)$-fundamental case can be handled similarly by using Theorem \ref{min} instead of Theorem \ref{min'}.

(a) $\Rightarrow$ (b) We argue by induction on the length $\tw$. If $\tw$ is of length zero, it is trivial. We assume the statement is true for any $K$-fundamental element with strictly smaller length than the length of $\tw$.

By Theorem \ref{min'}, we have $\tw \to_{S,\s} \tw' \in x' W_{\s(J')}$ for some $x' \in {}^S \tW$ and $J' \subset S$ such that $x' \s(J') {x'}\i=J'$. Hence $\nu_{x'}=\nu_{\tw'}$ and $J' \subset S_{\nu_{\tw'}}$. By Lemma \ref{red}, $\tw'$ is $K$-fundamental. Similar to \cite[Lemma 3.2]{H2}, we have $K \cdot_\s I \tw' I=K \cdot_\s I x' I$. Hence $x'$ is also $K$-fundamental. By Lemma \ref{KI}, $x'$ is fundamental. Hence $\ell_{\nu_{x'}}(x'\s)=0$ since $x'$ is also $P_{\nu_{x'}}$-fundamental by the proof of Theorem \ref{fund}. Obviously, we have $\ell(\tw'\s)=\ell(x'\s)+\ell_{\nu_{\tw'}}(\tw'\s)$. Therefore $\tw'$ satisfies (b). Thanks to Lemma \ref{red}, to show $\tw$ satisfies (b), we can assume that there exists $s \in S$ such that $\tw \overset s \to_{\s} \tw_1=s \tw \s(s)$ and $\tw_1$ satisfies (b). There are two cases as follows.

Case 1 $\tw > \tw_1$. By Lemma \ref{red}, $\tw \s(s)$ is $K$-fundamental and $s(\nu_{\tw \s(s)})=\nu_{\tw \s(s)}$. Then $\tw \s(s)$ satisfies (b) by induction hypothesis. Hence $\tw\s s \in x_2\s W_{J_2}$ for some fundamental element $x_2$ with $\ell_{\nu_{\tw \s(s)}}(x_2\s)=0$ and $J_2 \subset S_{\nu_{\tw \s(s)}}$ such that $x_2 \s(J_2) x_2\i=J_2$ and $\ell(\tw\s s)=\ell(x_2\s)+\ell_{\nu_{\tw \s(s)}}(\tw\s s)$. Since $s \in S_{\nu_{\tw \s(s)}}$ and $s \tw\s s < \tw\s s$, we have $s \in J_2$ and $\tw\s \in x_2\s W_{J_2}$, which means $\nu_{\tw}=\nu_{\tw \s(s)}=\nu_{x_2}$ and $x=x_2$.
It remains to show that $\ell_{\nu_{\tw}}(\tw\s)=\ell_{\nu_{\tw}}(\tw\s s)+1$, which follows from $\tw\s s < \tw\s$ and $s \in S_{\nu_{\tw}}$. Therefore $\tw$ satisfies (b).

Case 2 $\ell(\tw)=\ell(\tw_1)$ and $\tw \neq \tw_1$. By assumption, $\tw_1 \in x_1\s W_{J_1}$ for some fundamental element $x_1$ with $\ell_{\nu_{\tw_1}}(x_1\s)=0$ and $J_1 \subset S_{\nu_{\tw_1}}$ such that $x_1 \s(J_1) x_1\i=J_1$ and $\ell(\tw_1\s)=\ell(x_1\s)+\ell_{\nu_{\tw_1}}(\tw_1\s)$. First we assume $s \in S_{\nu_{\tw_1}}$, then $x=x_1$. Since $x \s(J_1) x\i =J_1$ and either $s \tw_1\s < \tw_1\s$ or $\tw_1\s s < \tw_1\s$, we have $s \in J_1$ and $\tw\s \in x\s W_{J_1}$. It remains to show $\ell_{\nu_{\tw}}(\tw\s)=\ell_{\nu_{\tw}}(\tw_1\s)$ which follows from the equality $\ell(\tw\s)=\ell(\tw_1\s)$. Now we assume $s(\nu_{\tw_1}) \neq \nu_{\tw_1}$. By Lemma \ref{change}, we have $x=s x_1 \s(s)$ and $\ell_{\nu_{\tw}}(x\s)=0$. Let $J_1'=s J_1 s \subset S_{\nu_x}$ and $\ell_{\nu_{\tw_1}}(\tw_1\s)=\ell_{\nu_{\tw}}(\tw\s)$. Moreover we have $x \s(J_1') x\i=J_1'$. Then $$\ell(x_1\s)+\ell_{\nu_{\tw_1}}(\tw_1\s)=\ell(\tw_1\s)=\ell(\tw\s) \ge \ell(x\s)+\ell_{\nu_{\tw}}(\tw\s),$$ which means $\ell(x_1\s) \ge \ell(x\s)$. Since $x_1$ is fundamental (straight), $x=s x_1 \s(s)$ is also fundamental and $\ell(x)=\ell(x_1)$. In a word, $\tw$ satisfies (b).

(b) $\Rightarrow$ (c) By the proof of \cite[Lemma 3.2]{H2}, $\tw$ is $M_{\nu_{\tw}}(\FF[[\e]])$-fundamental. Since $x$ is fundamental, $x$ is a $P_{\nu_x}$-alcove element, hence a $P_{\nu_{\tw}}$-alcove element. By Lemma \ref{length}, we see that $\tw$ is a $P_{\nu_{\tw}}$-alcove element.

(c) $\Rightarrow$ (a) It follows from Theorem \ref{bij}.
\end{proof}

\end{document}